\DeclareMathOperator*{\Mod}{mod}
\newcommand{\hide}[1]{}
\newcommand{\eps}{\varepsilon}
\renewcommand{\phi}{\varphi}
\newcommand{\Pd}{\mathcal{P}_d}
\newcommand{\Sd}{\mathcal{S}_d}
\newcommand{\Sdp}{\mathcal{S}_{d,\rho}}
\newtheorem{lemma}{Lemma}
\newtheorem{theorem}[lemma]{Theorem}
\newtheorem{corollary}[lemma]{Corollary}
\newcommand{\Z}{\mathbb {Z}}
\newcommand{\R}{\mathbb {R}}
\newcommand{\C}{\mathbb {C}}
\newcommand{\Cbar}{\overline{\C}}
\newcommand{\disk}{\mathbb{D}}
\newcommand{\diskbar}{\overline{\mathbb{D}}}
\newcommand{\sm}{\setminus}
\newcommand{\ovl}[1]{\overline{#1}}
\renewcommand{\Re}{\text{\rm Re}}
\renewcommand{\Im}{\text{\rm Im}}
\newcommand{\lineclear}{\rule{0pt}{0pt}\nopagebreak\par\nopagebreak\noindent}
\theoremstyle{remark}
\newtheorem{Remark}{Remark}
\title[Universal Starting Points for Newton's Method]{A small
probabilistic universal set of starting points for finding roots of
complex polynomials by Newton's method}
\author{B\'ela Bollob\'as, Malte Lackmann, Dierk Schleicher}
\address{Department of Pure Mathematics and Mathematical Statistics,
University of Cambridge, Cambridge CB3 0WB, UK; and
Department of Mathematical Sciences,
University of Memphis, Memphis TN 38152, USA.}
\email{B.Bollobas@dpmms.cam.ac.uk}
\address{Immenkorv 13, D-24582 Bordesholm, Germany}
\email{malte.lackmann@web.de}
\address{Research I, Jacobs University Bremen, Postfach 750 561,
D-28725 Bremen, Germany}
\email{dierk@jacobs-university.de}
\begin{document}

\begin{abstract}
We specify a small set, consisting of $O(d(\log\log d)^2)$ points,
that intersects the basins under Newton's method of \emph{all} roots
of \emph{all} (suitably normalized) complex polynomials of fixed
degrees $d$, with arbitrarily high probability. This set is an
efficient and universal \emph{probabilistic} set of starting points
to find all roots of polynomials of degree $d$ using Newton's method;
the best known \emph{deterministic} set of starting points consists
of $\lceil 1.1d(\log d)^2\rceil$ points.
\end{abstract}

\maketitle

\section{Introduction}

Newton's root-finding method is as old as analysis, but still not
well understood, even in the fundamental case of finding all roots of
a polynomial in a single variable. Its local convergence properties
are well known; near simple roots convergence is quadratic and thus
extremely rapid. However, the global dynamical properties are
insufficiently understood so that numerical analysis algorithms often
use different global methods, and resort to Newton's method for a
final local ``polishing'' of the roots.

This article is a contribution towards a better understanding of the
global properties of Newton's method, applied to polynomials in a
single complex variable. Even for polynomials over the reals, and
even if all the roots are real, it is often preferable to use complex
methods; see Figure~\ref{Fig:NewtonDynamics}.

\begin{figure}
\includegraphics[width=0.49\textwidth]{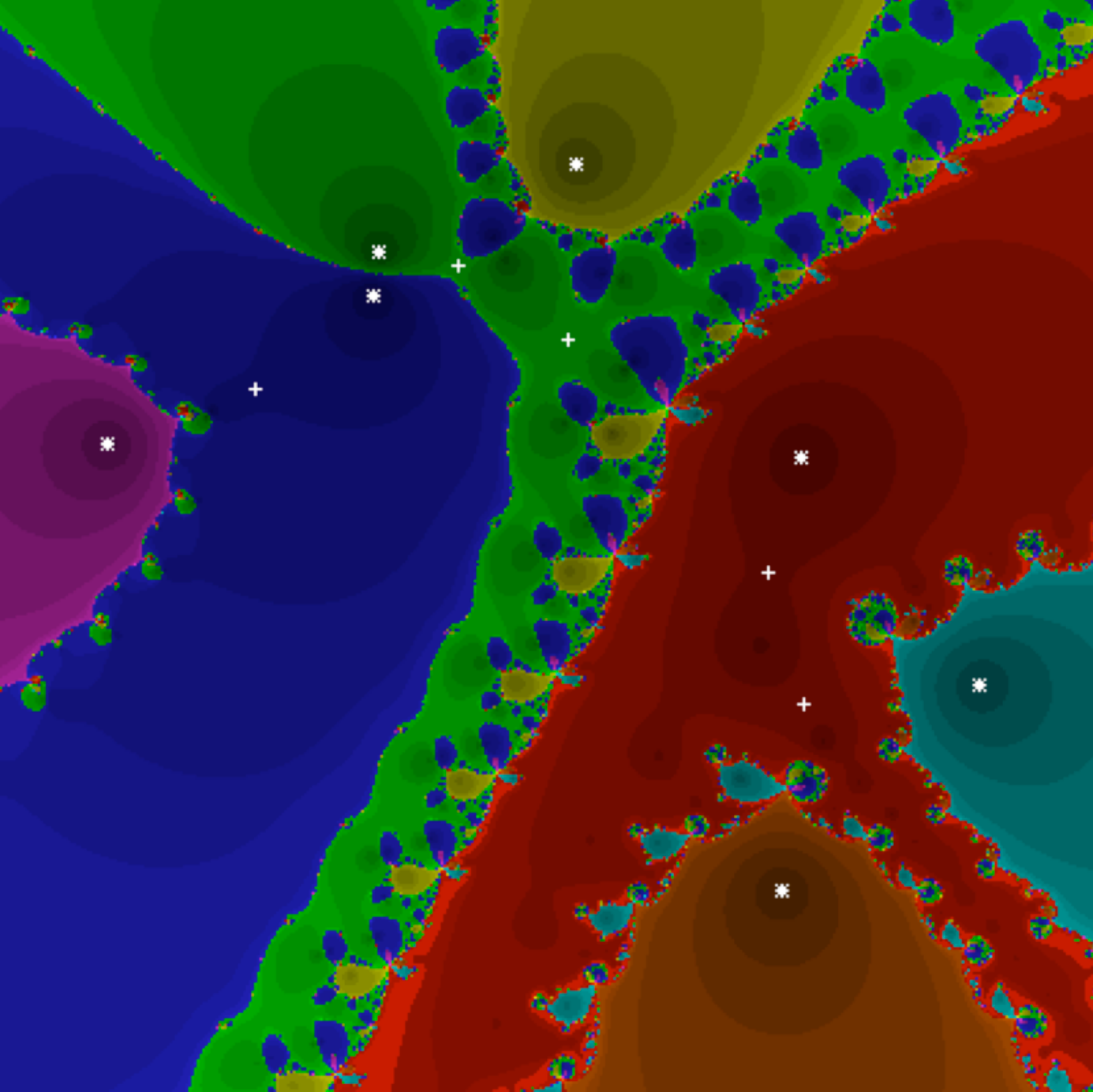}
\includegraphics[width=0.49\textwidth]{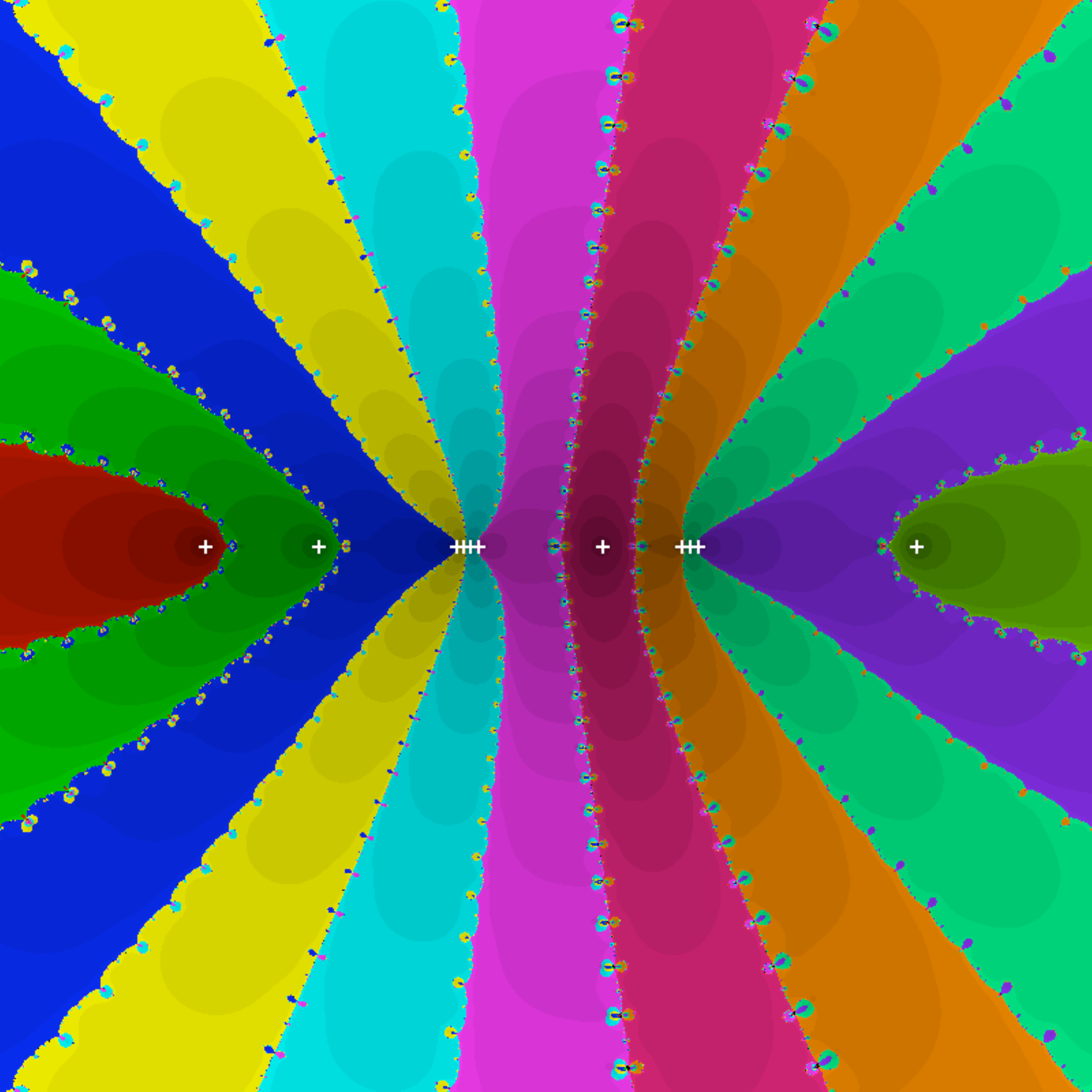}
\caption{Dynamical planes of Newton maps of two complex polynomials.
Different colors illustrate basins of attraction of different roots;
shades of color illustrate different speeds of convergence. It is
clearly visible that all immediate basins are unbounded and have one
or several channels to $\infty$ of different widths. Left: a
polynomial of degree $7$. Right: a polynomial of degree $11$ with all
roots real. Some of the roots are very close to each other; however, away
from the disk containing all the roots, the basins and their channels
all have almost uniform width, so that finding the real roots using
complex methods is much easier.}
\label{Fig:NewtonDynamics}
\end{figure}

\begin{figure}
\includegraphics[width=0.51\textwidth,trim=0 130 0 60,clip]{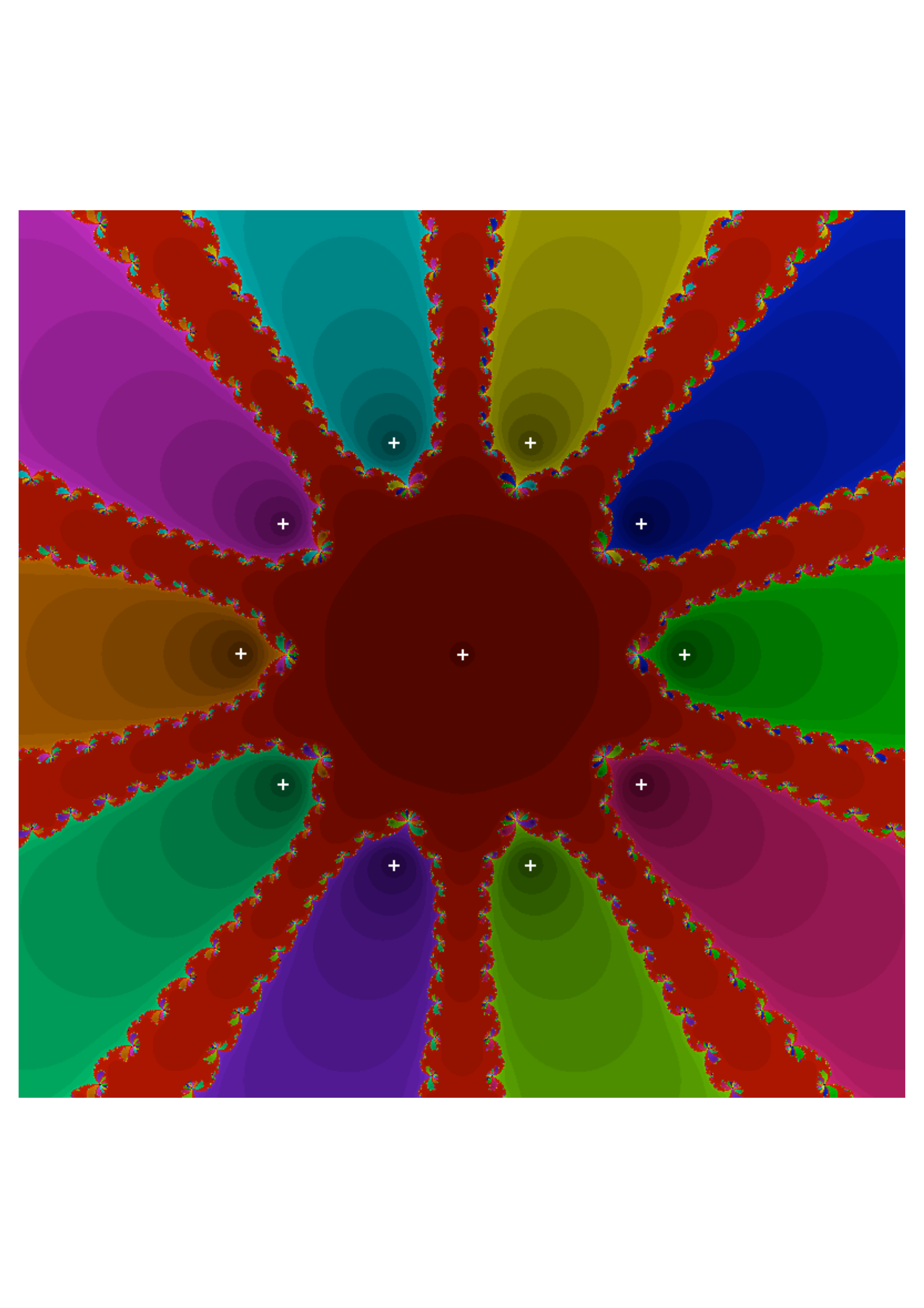}
\caption{The dynamical plane of the polynomial $p(z)=z(z^{10}-1)$: the ten roots of unity each have one ``thick'' channel, while the root $z=0$ has 10 channels (red) which are all rather ``thin''. The deterministic method from \cite{HSS} would search for the individual thin channels and thus requires more points, while our method searches for the union of all thin channels, which together are much bigger.}
\label{Fig:NewtonDynamicsThinChannels}
\end{figure}

Among the difficulties with Newton's method are the following:
\begin{itemize}
\item
if an orbit under iteration comes close to a critical point of the
polynomial, the Newton map sends the orbit far away near $\infty$, so
that control of the dynamics is lost, and in any case a large number
of iterations are required until the orbit comes back to where the
roots are;
\item
there are polynomials with open sets of starting points that do not
converge to any root (Smale~\cite{Smale} asked, in 1984, for a
classification of such polynomials; an answer has recently been given by
Mikulich in current work \cite{ZhenyaThesis});
\item
the boundary of the basins of convergence for the roots may have
positive planar Lebesgue measure (this follows from recent work by
Buff and Ch\'eritat on the existence of Julia sets with positive
measure \cite{BuffCheritat}, combined with Douady and Hubbard's
renormalization theory \cite{PolyLike});
\item
even if almost every point in $\C$ converges to some root under the
Newton iteration, our goal is to find \emph{all} roots of the
polynomial, and with bounded complexity. Finding some roots and
deflating is usually not an option, because deflation is in general
numerically unstable (unless the roots are found in a specific
order), and because deflation might not be compatible with the way
the polynomial may be specified, or evaluated efficiently (for
instance, if the polynomial itself is given by an efficient iteration
procedure).
\end{itemize}
See \cite{Johannes} for a recent survey of known results on Newton's method.

This article is a contribution towards the goal of turning Newton's method into an efficient algorithm.
To achieve this goal, one should:
\begin{itemize}
\item
select a finite set $\Sd$ of \emph{good starting points} that are
guaranteed to intersect the basins of all roots;
\item
specify a \emph{condition when to stop iterating} any of these starting
points, because the orbit is either sufficiently close to a root, or
the orbit is discarded in favor of some other starting points;
\item
give a good \emph{bound on the complexity} of Newton's method to find all
roots of the polynomial with prescribed precision.
\end{itemize}
This article is concerned with the first of these questions; we will not discuss the other two issues in detail (see for instance \cite{Smale2,Johannes}). Concerning efficiency of the Newton method, we mention the following recent result from \cite{NewtonEfficient,NewtonGlobal,NewtonTodor}:
roughly speaking, for ``most'' polynomials $p$ of degree
$d$, properly normalized, our universal set $\Sd$ contains $d$ points
that converge to the $d$ different roots of $p$ so that the total
number of Newton iterations, for all $d$ roots combined, to achieve
an accuracy of $\eps$ is at most $O(d^2\log^4d)+d\log|\log\eps|$.
This makes it possible to turn Newton's method into an \emph{efficient} algorithm for the problem of finding all roots of a given polynomial. 

To state our main result, let $\Pd$ be the space of polynomials of
degree $d$, normalized so
that all roots are contained in the complex unit disk $\disk$.

\begin{theorem}[Small Probabilistic Universal Set of Starting Points]
\label{Thm:Main} \lineclear
For every degree $d\ge 3$, there is an explicit universal
probabilistic set $\Sd$ consisting of $O(d(\log\log d)^2)$ starting
points so that for every polynomial $p\in\Pd$, the probability is
greater than $1/2$ that the immediate basin
of each root of $p$ contains at least one point in $\Sd$
(in fact, this probability is greater than $1-1/d\ge 2/3$).
\end{theorem}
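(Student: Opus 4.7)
The plan is to realize $\Sd$ as a collection of independent uniformly random points sprinkled on a small family of circles concentric with $\disk$, and to deduce the claimed probability from a quantitative lower bound on the \emph{total} angular width of each immediate basin on at least one of these circles. The structural picture I would rely on (see \cite{HSS,Johannes}) is that each immediate basin $U_\alpha$ of a root $\alpha$ of $p\in\Pd$ is simply connected and unbounded, with finitely many accesses (``channels'') to $\infty$, and intersects each circle $\{|z|=R\}$ for $R>1$ in finitely many arcs whose total angular length I denote $W_\alpha(R)$.

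The heart of the argument is a quantitative statement about $W_\alpha$, uniform over $p\in\Pd$, of the following form: there exist explicit radii $R_1<\dots<R_k$ with $k=O(\log\log d)$, logarithmically spaced on a suitable range outside $\disk$, such that for every $p\in\Pd$ and every root $\alpha$ of $p$, some index $j^*=j^*(p,\alpha)$ satisfies $W_\alpha(R_{j^*})\gtrsim (\log d)/(d\log\log d)$. This sharpens the per-channel estimate underlying \cite{HSS} (of order $1/(d\log d)$) by exploiting that a basin with many thin channels still has a thick \emph{union}, as illustrated by $p(z)=z(z^{10}-1)$ in Figure~\ref{Fig:NewtonDynamicsThinChannels}. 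I would prove it by pulling the channel traces back into a linearizing coordinate for the Newton map at its attracting fixed point at $\infty$ (multiplier $1-1/d$), where each channel becomes a near-sector; a combinatorial count of channels across basins (the total number of channels is controlled by the number of critical points of the Newton map) then shows that the ``thinness defect'' of each individual channel is compensated either by the basin having many channels or by it being wide on a different scale.

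Given this lemma, I would place $m=Cd\log\log d$ uniformly random points on each of the $k$ circles, for an absolute constant $C$ chosen large enough, yielding $|\Sd|=km=O(d(\log\log d)^2)$ as required. Fixing a root $\alpha$ and an index $j^*$ as above, the probability that none of the $m$ random samples on $\{|z|=R_{j^*}\}$ lies in $U_\alpha$ is at most
\[
  \Bigl(1-\tfrac{W_\alpha(R_{j^*})}{2\pi}\Bigr)^{m}\;\le\;\exp(-C'\log d)\;\le\;1/d^2
\]
for $C$ large. A union bound over the at most $d$ roots of $p$ then gives overall failure probability at most $1/d$, which is the quantitative tail asserted in the theorem.

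The main obstacle is unquestionably the geometric lemma of the second paragraph. The random sampling and union bound are routine once that lemma is in hand; but obtaining a \emph{total}-width bound stronger by a factor $(\log d)/\log\log d$ than the per-channel bound of \cite{HSS}, while simultaneously cutting the number of relevant radii from $O(\log d)$ to $O(\log\log d)$, requires delicate dynamical and combinatorial arguments near $\infty$. Together these two savings produce the $(\log\log d)^2$ factor, improving on the deterministic bound $1.1\,d(\log d)^2$ of \cite{HSS}.
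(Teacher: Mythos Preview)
Your central geometric lemma is false, and this is a genuine gap. You claim that for every root $\alpha$ there is a circle $R_{j^*}$ on which the total angular width satisfies $W_\alpha(R_{j^*})\gtrsim(\log d)/(d\log\log d)$. But consider $p(z)=z^d-1$: by symmetry each immediate basin has exactly one channel, and the holomorphic fixed point formula forces that channel to have modulus $\pi/\log 2\approx 4.53$. Hence $\sum_i\mu_i$ is a fixed constant, and the angular width of each basin on \emph{any} circle of radius $R>1$ is $\Theta(1/d)$ (indeed the $d$ disjoint basins partition the circle into $d$ congruent arcs). Since $(\log d)/(d\log\log d)\gg 1/d$, your bound is violated. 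With $m=Cd\log\log d$ random points on a circle, the probability of missing such a basin is only $(1-c/d)^m\approx(\log d)^{-Cc}$, and even multiplying over $O(\log\log d)$ independent circles this is $\exp(-O((\log\log d)^2))$, nowhere near the $1/d^2$ you need for the union bound.

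The paper avoids exactly this obstruction by \emph{not} treating all roots probabilistically. It introduces a threshold $M=\pi/\log\log d$ and splits roots into ``thick'' (some channel has modulus $\ge M$) and ``thin'' (all channels have modulus $<M$). Thick roots---including the $z^d-1$ example---are handled by a \emph{deterministic} grid of $O(d(\log\log d)^2)$ points on $\lceil 2/M\rceil=O(\log\log d)$ circles, which is the HSS construction truncated at modulus $M$ rather than at $\sim 1/\log d$. Thin roots, on the other hand, are forced by the fixed point formula to have many channels, and the paper proves (Lemma~\ref{Lem:area}) that then $\sum_i\mu_i>\tfrac12 Me^{\pi/M}\asymp(\log d)/\log\log d$; only \emph{these} roots enjoy the large-total-width phenomenon you are trying to use, and only for them does random sampling with $O(d\log\log d)$ points in a fundamental annulus succeed. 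Your sketch correctly identifies the ``many thin channels have thick union'' idea, but applies it to all roots, where it simply does not hold.
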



\begin{Remark}
The meaning of an ``explicit and
universal'' probabilistic set is as follows: we give an explicit
probability distribution of starting points that depends only on $d$
so that for any $p\in\Pd$, with probability at least $1-1/d$ all
immediate basins contain at least one point in this set.
(The probability $1-1/d$ may seem somewhat artificial; it is what we get naturally from of our estimates, and it is better than the uniform $2/3$.) 
Of course,
enlarging this set of points appropriately, the probability of success can be increased (see Remark \ref{higherProb}):
\emph{For every probability $\rho\in(0,1)$, there is an explicit and universal set $\Sdp$ of starting points with cardinality 
$O\big(d (\log\log d)^2  + d|\log(1-\rho)| \big)$
such that the statement of Theorem~\ref{Thm:Main} is true with probability $\rho$ instead of $1-1/d$.}
\hide{Further enlarging this set by a constant
factor, one can make sure that there are starting points ``well
within'' the basins, so that the number of iterations required to
find all roots with prescribed precision is small
\cite{NewtonEfficient,NewtonGlobal}.}

This result is in a similar spirit as \cite{HSS}, where a similar
explicit universal set of starting points is constructed.
It consists of $\lceil 1.1d (\log d)^2\rceil$ points and is deterministic. Our new
set is significantly smaller than the deterministic set, much closer
to the ``ideal lower bound'' of $d$ points, but we can do so only
using a probabilistic set. We believe that there is no deterministic
explicit and universal set of starting points with $o(d\log d)$
points.
\medskip

\noindent
\emph{Construction of the set $\Sd$.}
Our set $\Sd$ is constructed as follows: firstly, we define a ``fundamental annulus'' $V:=\left\{z\in\C\colon R\sqrt{1-1/d}<z<R\right\}$ for some
$R>1+\sqrt2$, and choose a
``deterministic set'' of approximately $(16/\pi) d (\log\log d)^2$
points that are distributed on $m=\lceil (2/\pi)\log\log d\rceil$
circles. These circles have radii $R_k=R(1-1/d)^{(k-1/2)/2m}$ for
$k=0,1,,\dots,m-1$, and each circle contains $\lceil4\pi
d\lceil(2/\pi) \log\log d\rceil\rceil$ points at equal distances.
This construction is in principle the same as in \cite{HSS}.
Secondly, we choose a ``probabilistic set'' of $\lceil(300/\pi) d 
\log\log d\rceil$ points randomly inside the annulus 
$A_R=\{z\in\C\colon R(d-1)/d-1/d<|z|<R\}$ for some $R\ge 11$.
These deterministic and probabilistic sets of points will
respectively find  ``thick'' and ``thin'' roots, as defined below. Iterating Newton's method starting at these points (in parallel or in any order), we will find all roots of $p$ with   probability at least $1-1/d$
(or with any probability $\rho\in(0,1)$ when taking appropriately more points in the probabilistic set).
\end{Remark}

\noindent
\emph{Historical Remark.}
This research has its origins at the 50th anniversary celebration of
the International Mathematical Olympiad (IMO) held in 2009 in Bremen,
Germany.
One chief goal of this celebration was to bring together olympiad
mathematics and research mathematics, and people involved in both.
This paper was authored by a research mathematician who in his
youth was one of the first contestants ever at IMOs and in 2009 was a
guest of honor at the 50th IMO, together with one of the contestants
there, and a research mathematician who was among the
senior organizers of that IMO and its anniversary. This work is thus
very much in the spirit of the IMO anniversary, and we are grateful
to this anniversary celebration that has brought us together.

We gratefully acknowledge partial support through NSF grants DMS-0906634,
CNS-0721983 and CCF-0728928, ARO grant W911NF-06-1-0076, and the  TAMOP-4.2.2/08/1/2008-0008 program of the Hungarian National Development Agency (BB), as well as the European Research and Training network CODY, the ESF programme HCAA, and the German Research Council DFG (DS). We also gladly acknowledge useful feedback from an anonymous referee.

\section{Channels and Their Moduli}

Consider a complex polynomial $p(z)=c\prod_{j=1}^d(z-\alpha_j)$ and
let $N_p(z)=z-p(z)/p'(z)$ be the associated Newton map. This is a
rational map of degree $d$ if all roots of $p$ are distinct, and of
lower degree otherwise. Without changing the Newton map, we may suppose
that $c=1$, and after rescaling, we may suppose that all
$\alpha_j\in\disk$.

For any root $\alpha$ of $p$, let $U_\alpha$ be the \emph{immediate basin}
of $\alpha$: the basin is the set of all $z\in\C$ that converge to
$\alpha$ under iteration of $N_p$, and the immediate basin is the
connected component containing $\alpha$. It is known that each
$U_\alpha$ is simply connected \cite{Prz} and that the restriction of
$N_p$ to $U_\alpha$ sends $U_\alpha$ to itself as a proper map of
some degree $k+1\in\{2,3,\dots,d\}$. We will use the construction and
some results from \cite{HSS}.
If $\phi\colon U_\alpha\to\disk$ is a Riemann map with
$\phi(\alpha)=0$, then $f:=\phi\circ N_p\circ\phi^{-1}$ is a proper
holomorphic self-map of $\disk$ of degree $k+1$ and thus extends, by
Schwarz reflection, to a rational map of degree $k+1$, and the
restriction of $f$ to $\partial\disk$ is a covering of
$\partial\disk$, also of degree $k+1$. In particular, the restriction
of $f$ to $\partial\disk$ has $k\ge 1$ fixed points
$q_1,\dots,q_{k}$. Set $\lambda_i:=f'(q_i)$, for $i=1,2,\dots,k$.

The holomorphic fixed point formula (which essentially is the residue
theorem for $1/(z-f(z))$; see \cite{MiBook}) implies that
\begin{equation}
\sum_{i=1}^{k} \frac{1}{\lambda_i-1} \ge 1
\label{Eq:FixedPointIndices}
\end{equation}
(with equality if the root $\alpha$ is simple). Each of these $k$
fixed points gives rise to a \emph{channel} to $\infty$ in the
immediate basin $U_\alpha$: for our purposes, a channel is an unbounded
component $B_i$ of $U_\alpha\sm\diskbar$. Near $\infty$, each channel
is mapped by $N_p$ conformally to itself, and it defines an access to
$\infty$ within $U_\alpha$ that is fixed by $N_p$. The quotient of
$B_i$ by the dynamics of $N_p$ is a conformal annulus with modulus
$\mu_i=\pi/\log\lambda_i$.

Choose some positive real number $M < \pi / \log 4\approx 2.266$ that
will be specified later (we will eventually use $M=\pi/\log\log d$
for large $d$).

We call a root $\alpha$ \emph{thick} if it has a channel with
modulus $\mu_i\ge M$, and \emph{thin} if there is no such channel. We
will treat these two cases separately.

\begin{itemize}
\item
We will explicitly and deterministically construct a set of
$\lceil 4\pi d\lceil 2/M\rceil^2 \rceil$ points that is guaranteed to
intersect each channel of a root
with modulus greater than $M$. This set will thus suffice to ``find''
all thick roots.
\item
The advantage of thin roots is that even though the individual
channels have small moduli, the total area of these channels within
any fundamental domain of the Newton dynamics is greater than in the
thick case: each channel may have little area, but there are more channels
in this case (see Figure~\ref{Fig:NewtonDynamicsThinChannels}). We 
show that if $\lceil 300\, d\log d/ Me^{\pi/M}\rceil$
points are distributed randomly in a certain fundamental annulus of 
the Newton dynamics, then the probability that the immediate basins 
of \emph{all} thin roots contain such a point is at least $1-1/d$.
\end{itemize}

\begin{Remark}
If $\alpha$ is a thin root, then all  $\mu_i<M$, hence all
$\lambda_i-1=e^{\pi/\mu_i}-1>e^{\pi/M}-1$, so by
(\ref{Eq:FixedPointIndices}), the number $k$ of channels of a thin
root is strictly greater than $e^{\pi/M}-1$. But the mapping degree
of $U_\alpha$
equals $k+1$, so $U_\alpha$ must contain $k > e^{\pi/M}-1$ of the at
most $2d-2$
critical points of $N_p$, and thus the number of thin roots is at
most $(2d-2)/(e^{\pi/M}-1)$.
In the end, we will use $M=\pi/\log\log d$, so the number of thin
roots will be at most $(2d-2)/(\log d-1)$: most roots will be thick.
It seems to be an interesting question (outside the scope of this paper) to
estimate how likely it is for a given polynomial of degree $d$ to have \emph{all} its roots thick.

If there are thin roots, then we can estimate
\begin{equation}
e^{\pi/M} < k+1 \le d \;;
\label{Eq:ThinRootCondition}
\end{equation}
in particular, there are no thin roots
at all if $M\le \pi/\log d$.
\end{Remark}

A \emph{conformal quadrilateral} is a Riemann domain $Q\subset\C$
with two distinguished connected and disjoint subsets of the boundary. In our
setting, the boundary of $Q$ may not be a topological curve, but the
two distinguished boundary subsets will be; we will call them
\emph{distinguished boundary arcs}. Then there is a unique
$h>0$ so that the domain $Q_h:=\{z\in\C\colon 0<\Im\,z<1,
0<\Re\,z<h\}$ has a Riemann map $\phi\colon Q\to Q_h$ that maps the
two distinguished boundary arcs onto the two horizontal sides of
$Q_h$ (the Riemann map may not extend continuously to the boundary of
$Q$, but it does so near the two distinguished boundary arcs; the
general framework of extremal length using curve families works even
if the boundaries are not curves). The value $h$ is defined as the
\emph{conformal modulus} of the quadrilateral $Q$ with respect to the
two boundary subsets, and denoted $\Mod(Q)$; it is invariant for
conformal homeomorphisms that respect the distinguished boundary subsets, in particular for Riemann maps with this property 
\cite{Ahlfors}.

Identifying the two distinguished boundary arcs, we obtain a complex annulus (a doubly connected Riemann surface) with modulus $\Mod(Q)$ or less (the exact modulus depends on how the boundaries are identified).

\section{Hitting thick roots}

\label{Sec:thick}

In this section, we will construct an explicit and deterministic set
of starting points that is guaranteed to intersect the basins of all
thick roots.
Our arguments are essentially the same as in \cite[Section~5]{HSS},
except that we no longer need to find all roots, but only the thick
ones.

If $R>(d+1)/(d-1)$ and $C_R$ is the circle of radius $R$ centered at
the origin, then $N_p$ maps $C_R$ homeomorphically onto some
topological circle around $\disk$, and there is some $\kappa>0$ so
that the round annulus
\[
V_{R,\kappa,d}=\left\{z\in\C\colon
R \left(\frac{d-1}{d}\right)^\kappa <|z|< R\right\}
\]
is contained in the topological
annulus between $C_R$ and $N_p(C_R)$; specifically, if $R\ge 1+\sqrt
2$, then $\kappa\ge 1/2$ for all $d$.
If $R$ tends to $\infty$, then $\kappa$ tends to $1$.
All this is \cite[Lemmas~4 and 12]{HSS}; see also Figure~\ref{Fig:Channels}.

\begin{figure}[htbp]
\includegraphics[width=1.0\textwidth,trim=0 0 0 0,clip]{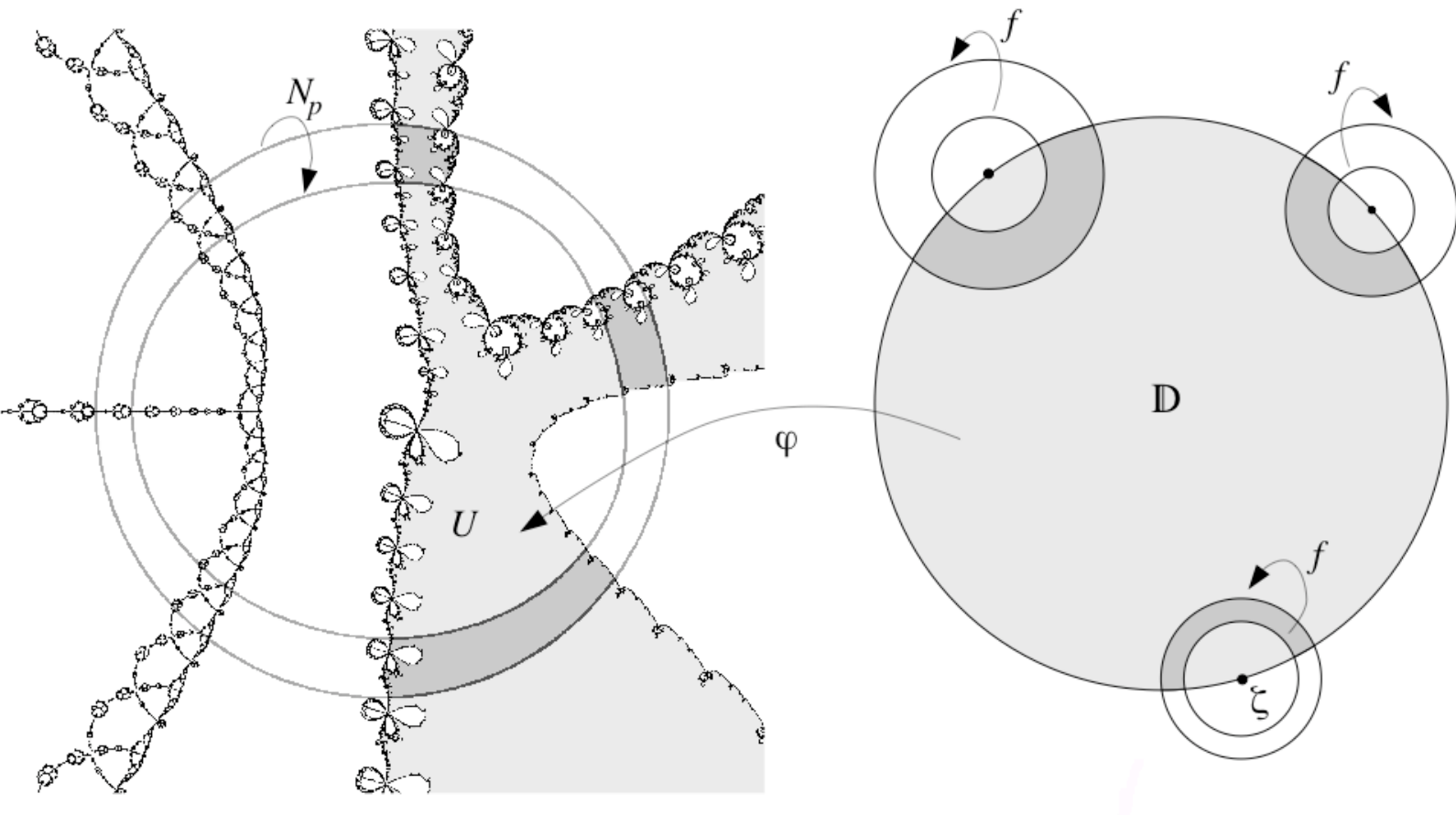}
\caption{Left: the dynamics of Newton's method for some complex
polynomial. Highlighted is the immediate basin of attraction of one
root, with fundamental domains within the channels shaded. Also shown
is the circle at radius $R$ and its image, which is a topological
(but not geometric) circle. Right: the complex unit disk $\disk$
provides a conformal model for the Newton dynamics of the immediate
basin. (Picture taken from \cite{HSS}.)}
\label{Fig:Channels}
\end{figure}

We will use the round annulus $V=V_{R,\kappa,d}$ with $R\ge 1+\sqrt
2$ and $\kappa=1/2$ (if we use larger values of $R$, then we can take
larger values of $\kappa$, and our bounds will eventually be slightly
better; however, in practice these starting points would be further
away from the roots, and the iteration would take longer).

\begin{Remark} \label{Rem:AreaModulusV}
The modulus of $V$ is $|\log((d-1)/d)|/4\pi>1/4\pi d$.
\end{Remark}

Consider some channel $B_i$. We want to define $Q_i$ as ``the part of
the channel $B_i$ within $V$''. If each of the two boundary circles
of $V$ intersects $B_i$ in a single connected arc, we set
$Q_i:=B_i\cap V$. However, if $B_i\sm V$ has more than two connected
components (see Figure~\ref{Fig:Qi}), we need to be more careful.
Consider the intersection of $B_i$ with $C_R$, the outer boundary of
$V$. Let $\gamma$ be any connected component in this intersection. It
separates $U_\alpha$ into two components, one of which contains the
root $\alpha$; then $\gamma$ will be called an \emph{essential} boundary
arc of $B_i\cap C_R$ if the component of $U_\alpha\sm\gamma$ not
containing $\alpha$ is unbounded: this means that $\gamma$ separates
the unbounded part of the channel $B_i$ from the root. At least one
component of $B_i\cap C_R$ is essential; choose one such essential
component $\gamma$, let $\gamma':=N_p(\gamma)$, and let $Q'_i$ be the
subset of $U_\alpha$ that is bounded by $\gamma$ and $\gamma'$ (if
$B_i$ intersects $C_R$ and equivalently $N_p(C_R)$ in only one
component, then $Q'_i$ is the part of $B_i$ between $C_R$ and
$N_p(C_R)$; in general, the difference may consist of some number of
bounded components). Then $Q'_i$ is a fundamental domain of $B_i$ by
the dynamics; when viewed as a quadrilateral with distinguished
boundary arcs $\gamma$ and $\gamma'$, then
$\Mod(Q'_i)\ge\Mod(B_i)=\mu_i$ ($Q'_i$ is a quadrilateral, the
modulus of $B_i$ is defined using the quotient annulus of $B_i$ by
the dynamics).

Now let $C_{R'}$ be the inner boundary circle of $V$ and consider all
essential arcs of intersection of $B_i\cap C_{R'}$. If there is only
one, then let $\gamma''$ be this essential arc. If there are several,
then they are totally ordered (because they all separate $\alpha$ in
$U_\alpha$ from the unbounded component of $B_i\sm V$). Let
$\gamma''$ be the outermost component that separates $\alpha$ from
$\gamma$ (i.e., the one closest to $\gamma$), and let $Q_i$ be the component of
$B_i\sm(\gamma\cup\gamma'')$ that is bounded by $\gamma$ and
$\gamma''$. This is a conformal quadrilateral with $Q_i\subset Q'_i$,
and with $\gamma$ and $\gamma''$ as distinguished boundary arcs, and we
have $\Mod(Q_i)\ge\Mod(Q'_i)\ge\mu_i$.

\begin{figure}[htbp]
\includegraphics[width=100mm,trim=40 60 40 60,clip]{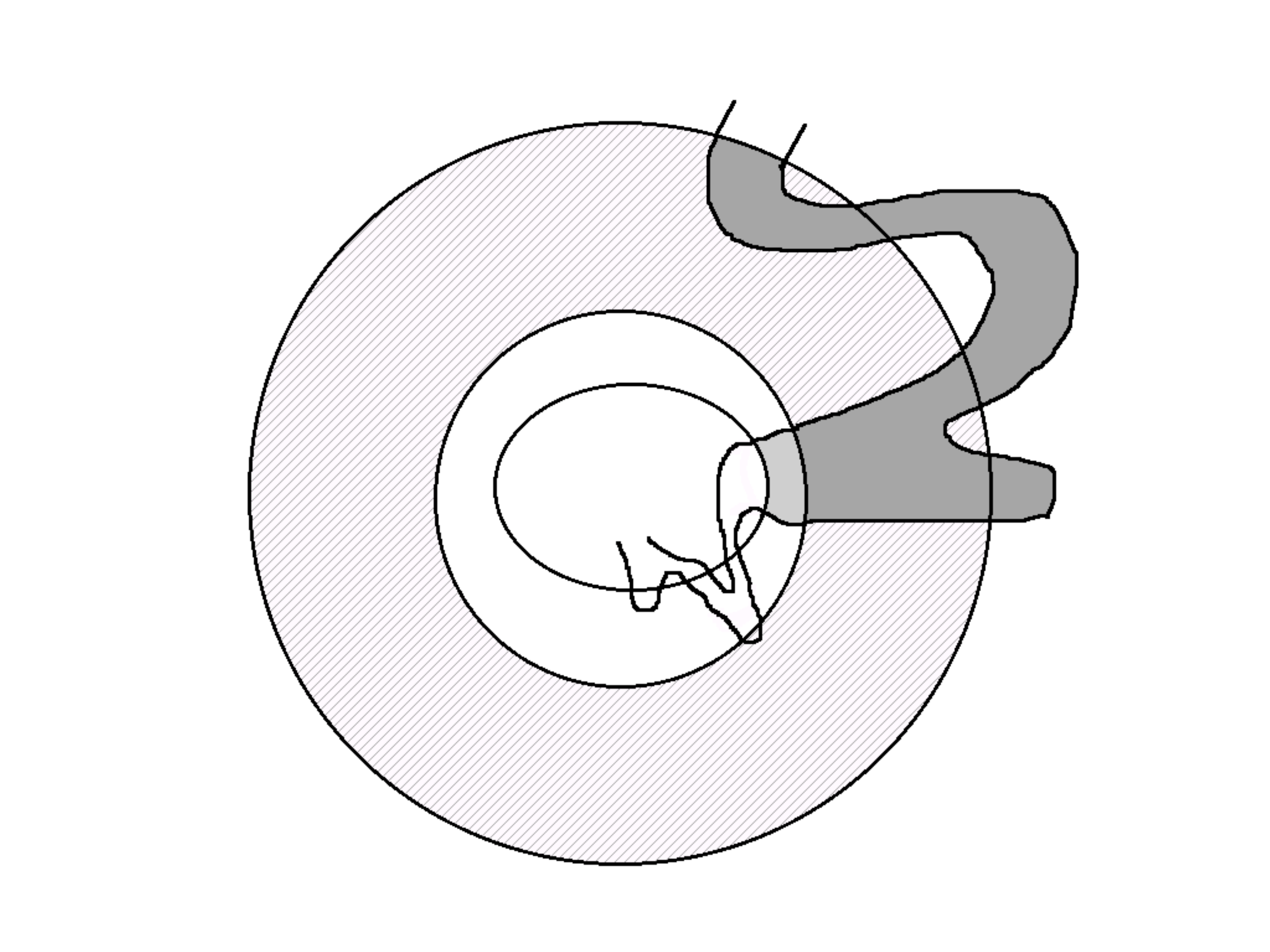}
\begin{picture}(0,0)
\put(-210,50){$V$}
\put(-246,50){$C_R$}
\put(-157,128){$C'_R$}
\put(-177,90){$N_p(C_R)$}
\put(-108,190){$B_i$}
\put(-117,180){$\gamma$}
\put(-124,93){$\gamma'$}
\end{picture}
\label{Fig:Qi}
\caption{The annulus $V$ (hatched). Its outer boundary circle is
$C_R$; the image $N_p(C_r)$ is a topological circle within the
bounded complementary component of $V$. Also shown is a channel
$B_i$; it intersects $C_R$ in four arcs, three of which are
essential. Shaded is the quadrilateral $Q'_i$ which is bounded by two
essential arcs, one on $C_R$ and one on $N_p(C_R)$; it is a
fundamental domain of $B_i$ modulo $N_p$. The quadrilateral
$Q_i\subset Q'_i$ is shaded darker: it is bounded by two essential
arcs on $\partial V$, but may not be contained in $V$. }
\end{figure}

Our task will be to distribute sufficiently many points into $V$ so
that we hit quadrilaterals $Q_i\subset V$ with moduli bounded below.

\begin{lemma}
\label{Lem:UniversalGeometry}
Let $S=\{z\in\C\colon -1/2<\Re\,z<1/2\}$ and let $Q\subset \C$ be a
quadrilateral whose two distinguished boundary arcs are on the two
vertical sides of $S$, one on each. Suppose that $Q$ is disjoint from the set
$i\Z$. Then the modulus of $Q$ is at most $2$.
\end{lemma}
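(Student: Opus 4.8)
The plan is to bound the modulus of $Q$ by constructing a suitable metric on $S$ and applying the extremal-length (overflowing) criterion. Recall that $\Mod(Q)$, measured between the two vertical sides of $S$, equals the extremal length of the family $\Gamma$ of curves in $Q$ joining the two distinguished boundary arcs; equivalently, by the dual characterization, $1/\Mod(Q)$ is the extremal length of the family $\Gamma^*$ of curves in $Q$ separating the two arcs. The quickest route is to exhibit a single admissible metric $\rho\,|dz|$ for $\Gamma$ (so every curve in $\Gamma$ has $\rho$-length at least $1$) whose total $\rho$-area $A$ on $Q$ satisfies $A \le 2$; then $\Mod(Q) = 1/\lambda(\Gamma) \le A \le 2$.

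The key geometric input is the hypothesis $Q \cap i\Z = \emptyset$. First I would note that any curve $\delta \in \Gamma$ runs from $\{\Re z = -1/2\}$ to $\{\Re z = 1/2\}$, so it has horizontal extent $1$; in particular $\int_\delta |dx| \ge 1$. This already suggests the Euclidean metric $\rho \equiv 1$, which is admissible for $\Gamma$, but its area on all of $S$ is infinite, so I must use the constraint to cut $S$ down. The point $i\Z \cap Q = \emptyset$ forces $Q$ to avoid the integer lattice points on the imaginary axis; combined with the strip width, this should confine the relevant part of $Q$ — or rather each curve of $\Gamma$ — to a bounded region. Concretely, I would argue that a curve $\delta\in\Gamma$ cannot wind: since $Q$ is connected, simply connected (a Riemann domain serving as a quadrilateral), and misses $i\Z$, the imaginary parts along $Q$ are confined to an interval of length at most $1$ between two consecutive missing integers — because otherwise $Q$ would have to enclose or cross a point of $i\Z$, contradicting either $Q\cap i\Z=\emptyset$ or the separation structure near the two vertical sides. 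Hence, after a vertical translation by an integer, $Q \subset \{-1/2 < \Re z < 1/2,\ 0 < \Im z < 1\}$, which is exactly the unit square $Q_1$; the modulus of the unit square between its vertical sides is $1 \le 2$.

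The main obstacle is making the confinement step rigorous, because $Q$ need not have nice boundary, need not be contained in $V$, and the "distinguished boundary arcs'' on the two vertical sides need not be single arcs a priori (though the statement takes them to be connected). The careful version: fix a curve $\delta\in\Gamma$; it is a compact connected set in $Q$ meeting both lines $\Re z = \pm 1/2$, hence $\delta$ together with segments of those two lines bounds (in $S$) a region, and since $\delta$ avoids $i\Z$ and $i\Z$ lies on $\Re z = 0 \subset S$, the values of $\Im$ on $\delta$ must lie in a single gap $(n, n+1)$ for some $n \in \Z$ — if $\Im$ on $\delta$ straddled some integer $n$, then by the intermediate value theorem on the connected set $\delta$ there would be a point of $\delta$ with $\Im = n$; this point need not have $\Re = 0$, so I actually need the stronger observation that $Q$ (not just $\delta$) is confined, using that $Q$ is a Riemann domain whose boundary near each vertical side is a single arc and applying a connectedness/monodromy argument to rule out $Q$ surrounding a lattice point. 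Once $Q \subset \{n < \Im z < n+1\}$ is established, translate by $-ni$ and use that $\Mod$ is monotone under inclusion of quadrilaterals with matching distinguished sides: $\Mod(Q) \le \Mod(Q_1) = 1 \le 2$. (The slack between $1$ and $2$ in the statement leaves room for the case where the boundary arcs touch the corners $\pm 1/2 + ni$ or $\pm 1/2 + (n+1)i$, i.e. where one must allow $\Im$ to reach the closed interval $[n, n+1]$; even then the enclosing square has modulus $1$, so the bound $2$ is comfortable.)
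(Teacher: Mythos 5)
Your opening instinct --- exhibit an admissible metric $\rho$ for the connecting family $\Gamma$ with $\int\rho^2\,dx\,dy\le 2$ --- is exactly the paper's approach, but you then abandon it in favor of a confinement argument whose key claim is false. The hypothesis is only that $Q$ avoids the discrete set $i\Z$; it does not force $Q\subset\{n<\Im\,z<n+1\}$. For a counterexample, take $Q$ to be a thin, simply connected tube that enters $S$ from the left side near height $1/2$, crosses the imaginary axis inside the gap $(0,i)$, and then meanders within $\{0.1<\Re\,z<1/2\}$ up to height $100$ before reaching the right side of $S$. This $Q$ misses $i\Z$ and encloses no lattice point, yet has vertical extent $100$, so no integer translate places it in the unit square; your inclusion step $\Mod(Q)\le\Mod(Q_1)$ therefore has no valid premise. (The lemma still holds for such a $Q$ --- the tube is thin, so its modulus is small --- but not by this route.) The correct, weaker consequence of $Q\cap i\Z=\emptyset$ is only that every curve of $\Gamma$ must cross the imaginary axis through one and the same segment $[ni,(n+1)i]$: a single curve crosses $\{\Re\,z=0\}$ inside $i\R\sm i\Z$, and two curves crossing through different gaps could be closed up through the connected distinguished arcs to form a loop in $\ovl{Q}$ trapping some intermediate point $ki\notin Q$, contradicting simple connectivity.

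With that forced-crossing statement in hand, the paper finishes along the lines of your first idea: normalize $n=0$ and let $\rho$ be the characteristic function of the box $B=\{-1/2<\Re\,z<1/2,\ -1/2<\Im\,z<3/2\}$, of area $2$. Any $\gamma\in\Gamma$ passes through some $p\in[0,i]$, and $p$ lies at distance at least $1/2$ from $\partial B$ while $\gamma$ must reach both lines $\Re\,z=\pm1/2$; hence the two halves of $\gamma$ on either side of $p$ each contribute length at least $1/2$ inside $B$, so $\int_\gamma\rho\,|dz|\ge1$. This gives $\lambda(\Gamma)\ge 1/2$ and $\Mod(Q)\le2$. Note that the constant $2$ is the area of this box, not slack for corner cases as your final parenthetical suggests. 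To repair your writeup, replace the confinement of $Q$ by the forced-crossing claim for curves and carry out your original metric-based plan with $B$ as the support of $\rho$.
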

\begin{proof}
This is an easy extremal length exercise \cite{Ahlfors}. There is
an integer $n\in\Z$ so that any curve in $Q$ connecting the two
distinguished boundary arcs must intersect the segment $[ni,(n+1)i]$.
Without loss of generality, suppose that $n=0$.

Let $B:=\{z\in S\colon -1/2<\Im\,z<3/2\}$ and let $\rho$ be the
characteristic function of $B$. Then for any curve $\gamma\subset Q$
connecting the two distinguished boundary arcs, its intersection with
$B$ has length at least $1$. Since $\int_\C \rho^2\,dx\,dy=2$, it
follows that $\Mod(Q)\le 2$.
\end{proof}
\begin{Remark}
The bound of $2$ is not sharp. It is not hard to calculate the exact
bound \cite{Ahlfors}, but we are not optimizing constant factors here.
\end{Remark}

\begin{lemma}
\label{Lem:ConstructionThick}
If $V$ is subdivided into at least $2/M$ concentric and conformally
equivalent subannuli, and at least $4\pi d\lceil 2/M \rceil$ points
are distributed onto the core circles of all subannuli, so that the
points on all circles are equidistributed, then each quadrilateral
$Q_i$ with modulus at least $M$ contains at least one of these points.
\end{lemma}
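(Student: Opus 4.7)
The plan is to derive a contradiction: I would assume $Q_i$ has modulus at least $M$ and yet avoids every equidistributed marked point, and then use Lemma~\ref{Lem:UniversalGeometry} on each subannulus separately, combined with the conformal series rule for extremal length, to force $\Mod(Q_i)<M$. To set things up I would pass to logarithmic coordinates $z\mapsto\log z$, which unrolls $V$ into a horizontal strip of height $H=|\log((d-1)/d)|/2$, periodic of period $2\pi$ in the angular direction. In these coordinates each subannulus $A_j$ is a substrip of height $H/m$, its core circle is the horizontal midline of the substrip, and the $n\ge 4\pi d\lceil 2/M\rceil$ marked points form a periodic lattice of horizontal spacing $2\pi/n$ on that midline. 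Since $Q_i\subset V$ is simply connected, it lifts to a single bounded region $\tilde Q_i$ in the universal cover, with its two distinguished boundary arcs on the lines $y=H$ and $y=0$.

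For each $j$ I would pick any spanning component $R$ of $\tilde Q_i\cap(\text{substrip } j)$: a sub-quadrilateral whose distinguished sides are arcs on $y=(j-1)H/m$ and $y=jH/m$. Rotating by ninety degrees and rescaling by $m/H$ turns this substrip into a strip of unit width, whose central vertical axis carries the rescaled lattice at spacing $2\pi m/(nH)$. By Remark~\ref{Rem:AreaModulusV} we have $H>1/(2d)$, and combined with $n\ge 4\pi d\lceil 2/M\rceil$ this makes the rescaled spacing at most $1$; since the lattice then refines $i\Z$, the proof of Lemma~\ref{Lem:UniversalGeometry} applies with the same barrier $B$ and yields $\Mod(R)\le 2$.

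The final step invokes the conformal series rule: when a quadrilateral is cut by $m-1$ cross-cuts parallel to its distinguished sides into sub-quadrilaterals $R_1,\ldots,R_m$, one has $1/\Mod(\tilde Q_i)\ge\sum_{j=1}^m 1/\Mod(R_j)$. Summing the bound $1/\Mod(R_j)\ge 1/2$ over the $m\ge 2/M$ substrips gives $1/\Mod(Q_i)\ge m/2\ge 1/M$, so $\Mod(Q_i)\le M$, contradicting $\Mod(Q_i)\ge M$ (with strict inequality supplied by the ceiling $m=\lceil 2/M\rceil$ or by a nondegeneracy in Lemma~\ref{Lem:UniversalGeometry}). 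The step I expect to be the main obstacle is the case in which $\tilde Q_i\cap(\text{substrip } j)$ splits into several disjoint spanning components, since these combine in parallel and their moduli add, so naively $\Mod(R_j)$ may exceed $2$. The fix is to use that distinct spanning components must occupy distinct angular gaps of width $2\pi/n$; the per-circle count $n=4\pi d\lceil 2/M\rceil$ is chosen precisely so that each such component is narrow enough to have conformal modulus well below $2$, and together with the simple connectedness of $Q_i$ (which constrains how branches can appear and disappear across successive subannuli) this keeps $\sum_j 1/\Mod(R_j)\ge 1/M$, so that the contradiction still closes.
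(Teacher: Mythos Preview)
Your overall architecture---pass to logarithmic coordinates, invoke Lemma~\ref{Lem:UniversalGeometry} on each subannulus, and combine via the Gr\"otzsch series inequality---is exactly the paper's. The gap is in the decomposition step. You define $R_j$ as a spanning component of $\tilde Q_i\cap(\text{substrip }j)$ and then appeal to the series rule ``when a quadrilateral is cut by $m-1$ cross-cuts into sub-quadrilaterals $R_1,\dots,R_m$''. But your $R_j$ are \emph{not} obtained by cross-cuts: if some level has several spanning components and you select one, a curve in $Q_i$ joining the two distinguished arcs may thread through a different component and miss your chosen $R_j$ entirely, so the inequality $1/\Mod(Q_i)\ge\sum_j 1/\Mod(R_j)$ need not hold. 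Your proposed fix does not close this: even if every individual spanning component has modulus at most~$1$ because it sits in a single angular gap, their parallel sum within a level can be large, and the vague appeal to simple connectedness across levels does not yield the needed bound. (A side issue: $Q_i$ need not be contained in $V$---its distinguished arcs lie on $\partial V$, but the domain itself may bulge out---so ``$Q_i\subset V$'' is not available either.)

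The paper resolves this by cutting $Q_i$ not by intersection with the $V_j$ but by a single \emph{essential} arc on each intermediate circle: a component of $Q_i\cap\partial V_j$ that separates the two distinguished sides of $Q_i$. These $m-1$ arcs are genuine cross-cuts and produce $m$ connected sub-quadrilaterals $Q'_1,\dots,Q'_m$ (each with distinguished arcs on the boundary circles of $V_j$, though $Q'_j$ itself may protrude beyond $V_j$). Now Gr\"otzsch applies cleanly. From there the paper argues in the direct rather than contrapositive form: some $Q'_j$ has $\Mod(Q'_j)\ge m\cdot\Mod(Q_i)\ge 2$, and Lemma~\ref{Lem:UniversalGeometry} (after rescaling the strip corresponding to $V_j$ to unit width) forces that single $Q'_j$ to meet the lattice on the core circle of $V_j$. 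Your contrapositive version would work equally well once you adopt the essential-arc decomposition; the two are logically equivalent.
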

\begin{proof}
Let
$m:=\lceil 2/M\rceil$ and subdivide $V$ into $m$ concentric and
conformally equivalent subannuli $V_1,\dots,V_{m}$, ordered by
decreasing radii (so that $V_k=\{z\in V\colon R\beta^{k} < |z| <
R\beta^{k-1}\}$ for $\beta=(1-1/d)^{1/2m}$)
. Write $Q$ for $Q_i$; this is a quadrilateral for
which the two distinguished boundary arcs are on
$\partial V$, one on each boundary component of $V$.

Subdivide $Q$ into quadrilaterals $Q'_1,\dots,Q'_{m}$ as follows,
similarly as above. The
common boundary circle of $V_j$ and $V_{j+1}$ may intersect $Q$ in
several arcs; such an arc is essential if it separates the root
$\alpha$ from the unbounded component of $B_i\sm V$. Use an essential
arc to separate $Q'_j$
from $Q'_{j+1}$, for $j=1,2,\dots,m-1$. (In the special case that
$B_i\cap \partial V_j$ only has two connected components, then simply
$Q'_j=B_i\cap V_j$.)

By the Gr\"otzsch inequality, one of the quadrilaterals $Q'_j$ has
modulus $\Mod(Q'_j)\ge m\cdot\Mod(Q)\ge \lceil 2/M\rceil M\ge 2$.
Supposing for now that $0\not\in Q'_j$ and taking
logarithms, the annulus $V_j$ becomes an infinite vertical strip of
width $|\log((d-1)/d)|/2m>1/2md$, and $Q'_j$ becomes a
quadrilateral that connects the two boundary sides of the strip; see
Figure~\ref{Fig:PointGrid}.

\begin{figure}
\includegraphics[width=0.8\textwidth]{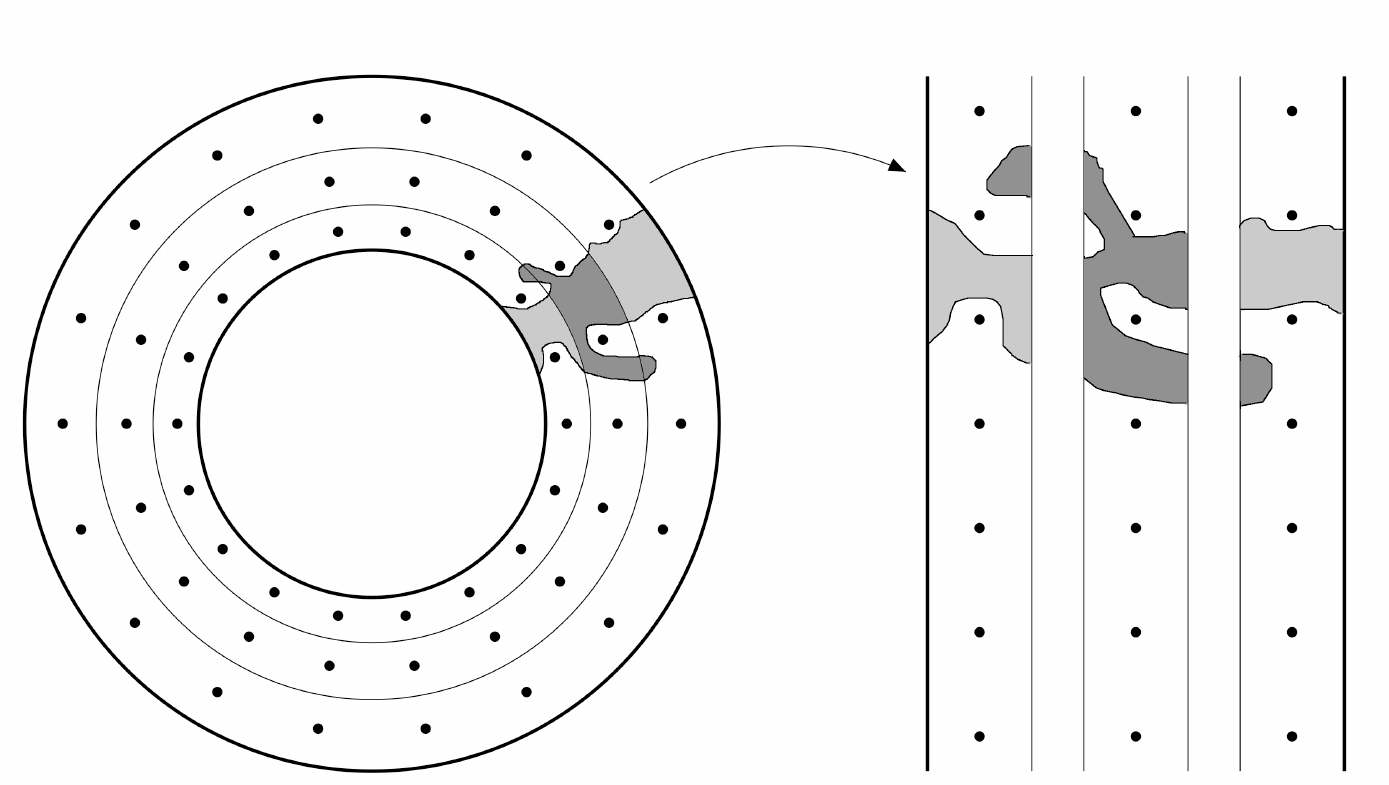}
\begin{picture}(100,000)
\put(60,150){$\log$}
\end{picture}
\caption{The annulus $V$ is subdivided into $m=3$ concentric
subannuli, all of equal moduli. The logarithm unfolds these annuli to
vertical strips (moved apart to show them separately). Highlighted is
the intersection of one channel with $V$. The quadrilateral in the
channel corresponding to the middle subannulus is shown in a darker
shade: notice that it intersects the other subannuli as well.
}
\label{Fig:PointGrid}
\end{figure}

By Lemma~\ref{Lem:UniversalGeometry}, appropriately rescaled, each
annulus of modulus $2$ intersects the central vertical line within
this strip in a straight line segment of length at least $1/2md$.
Therefore, placing an infinite sequence of points on any vertical
line within the strip so that adjacent points have
distance less than $1/2md$, one can be sure that at least one of
these points intersects the annulus. The exponential map projects the
strip back onto $V_j$ as a universal cover and has period $2\pi i$,
so the required number of points on $V_j$ is $4\pi md=4\pi
d\lceil 2/M\rceil$.

If $Q'_j$ happens to contain the point $z=0$, then one cannot take the
log of $Q'_j$; but one can take the log of $Q'_j\cap V_j$ and transport
the function $\rho$ in the proof of Lemma~\ref{Lem:UniversalGeometry}
into $Q'_j\cap V_j$. This suffices for the conclusion to remain valid.
\end{proof}

\begin{corollary}[Deterministic Starting Points for Thick Roots] \lineclear
For every $d$ there is an an explicit set consisting of $\big\lceil 4\pi d\lceil
2/M\rceil \big\rceil\lceil 2/M\rceil\approx 16\pi d/M^2$ points in $V$ so
that for each $p\in\Pd$ and each thick
root of $p$, at least one point in $\Pd$ is contained in the
immediate basin of this root.
\end{corollary}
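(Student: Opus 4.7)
The plan is to assemble the set directly from the construction in Lemma~\ref{Lem:ConstructionThick}, which already does essentially all of the geometric work. I would first set $m:=\lceil 2/M\rceil$ and subdivide $V$ into $m$ concentric, conformally equivalent subannuli $V_1,\dots,V_m$, with radii $R_k=R\,\beta^{k-1/2}$ where $\beta=(1-1/d)^{1/(2m)}$, so that the core circle of $V_k$ sits at radius $R_k$. On each such core circle I would place $\lceil 4\pi d\lceil 2/M\rceil\rceil$ points at equal angular spacings, giving a total of $\lceil 4\pi d\lceil 2/M\rceil\rceil\cdot\lceil 2/M\rceil\approx 16\pi d/M^2$ points, as required.

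Next I would verify the hitting property. Fix any $p\in\Pd$ and any thick root $\alpha$ of $p$: by definition there is some channel $B_i$ of $U_\alpha$ whose modulus satisfies $\mu_i\ge M$. Following the construction given just before Lemma~\ref{Lem:UniversalGeometry}, I associate to $B_i$ a conformal quadrilateral $Q_i\subset U_\alpha$, with distinguished boundary arcs on the two boundary circles of $V$, satisfying $\Mod(Q_i)\ge \mu_i\ge M$. Applying Lemma~\ref{Lem:ConstructionThick} to this $Q_i$ yields that at least one of the chosen grid points lies in $Q_i$, and hence in $U_\alpha$.

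Since the construction of the set of points depends only on $d$ (through $M$ and the choice of $R\ge 1+\sqrt 2$), the set is explicit and universal: the same finite set works simultaneously for all $p\in\Pd$ and all thick roots. To finish, I would remark on the cardinality bookkeeping: both ceilings introduce only lower-order overhead, and the total stays at $O(d/M^2)$, which matches the claimed $\lceil 4\pi d\lceil 2/M\rceil\rceil\lceil 2/M\rceil$.

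The main obstacle is not really an obstacle at all, since Lemma~\ref{Lem:ConstructionThick} has already done the nontrivial extremal length argument; the only point that needs a little care is checking that the quadrilateral $Q_i$ associated to a thick channel really has modulus at least $M$, which is precisely the content of the construction in the paragraph preceding Lemma~\ref{Lem:UniversalGeometry} (using $\Mod(Q_i)\ge\Mod(Q'_i)\ge\mu_i$). Once this is in place, the corollary follows immediately by aggregating over the (at most $d$) thick roots.
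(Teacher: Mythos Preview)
Your proposal is correct and follows essentially the same approach as the paper's own proof: invoke the construction of Lemma~\ref{Lem:ConstructionThick}, count $m=\lceil 2/M\rceil$ circles with $\lceil 4\pi d\lceil 2/M\rceil\rceil$ points each, and observe that for any thick root the associated quadrilateral $Q_i$ has modulus at least $M$ and is therefore hit. The paper's proof is simply a terser version of what you wrote.
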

\begin{proof}
Using the construction described in Lemma
\ref{Lem:ConstructionThick}, we have $m = \lceil 2/M\rceil$ circles,
and each circle contains $\lceil 4\pi d\lceil 2/M\rceil\rceil$
points. Hence the total number of required points is as claimed.
These points intersect each quadrilateral $Q_i$ and thus the
immediate basin of each thick root.
\end{proof}

\section{Hitting Thin Roots}

Our goal in this case is to find a good lower bound for the area of
the union of all channels of any root, guaranteeing us that we will hit one of
the channels with high probability if we distribute sufficiently many
points randomly on a specified annulus.
The area of intersection of a channel with modulus $\mu_i$ with an
annulus will be bounded below by some multiple of $\mu_i$, so the total area of
intersection of an immediate basin with the annulus will be
proportional to $\sum \mu_i$, summed over all channels of the root.
We thus start with a lower bound for
$\sum_{i=1}^k \mu_i$.

Set $a_i = \frac{1}{\lambda_i-1}$, so that $\sum_{i=1}^k a_i \ge 1$. We have \[
\mu_i = \frac \pi {\log \lambda_i} = \frac{\pi}{\log(1+\frac1{a_i})}\;.
\]
Since $\mu_i < M$ for all $i$, we get that $a_i < 1/(e^{\pi/ M}-1)$
for all $i$.

We want to find a lower bound for
\[
\sum_{i=1}^k \mu_i
=\sum_{i=1}^k \frac{\pi}{\log(1+1/a_i)}
\]
subject to the conditions $\sum_{i=1}^k a_i \ge 1$ and $a_i <
1/(e^{\pi /M}-1)$.

\begin{lemma}
\label{Lem:Concave}
The function $f\colon \R^+\to\R^+$, $f(x)=\pi/\log(1+1/x)$ is strictly
monotonically increasing and concave (i.e., its graph is above the
line segment through any two points on it).
\end{lemma}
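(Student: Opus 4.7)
The plan is to handle the two claims separately, with monotonicity being essentially immediate and concavity reducing to a standard Pad\'e-type logarithm inequality.

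For monotonicity, I would write $f(x)=\pi/u(x)$ with $u(x):=\log(1+1/x)=\log(x+1)-\log x$. Then $u(x)>0$ on $\R^+$ and $u'(x)=\frac1{x+1}-\frac1x=-\frac1{x(x+1)}<0$, so $u$ is strictly decreasing and positive, whence $f=\pi/u$ is strictly increasing. This part is routine.

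For concavity I would compute the second derivative directly. From $f=\pi/u$ one obtains
\[
f''(x)=-\pi\,\frac{u''(x)\,u(x)-2\bigl(u'(x)\bigr)^2}{u(x)^3},
\]
and since $u(x)>0$, the sign of $f''$ is opposite to that of $u''u-2(u')^2$. Using $u'(x)=-\frac1{x(x+1)}$ and $u''(x)=\frac{2x+1}{x^2(x+1)^2}$, the inequality $f''(x)\le 0$ becomes
\[
\frac{2x+1}{x^2(x+1)^2}\,\log\!\Bigl(1+\frac1x\Bigr)\;\ge\;\frac{2}{x^2(x+1)^2},
\]
i.e.\ the clean one-variable inequality $(2x+1)\log(1+1/x)\ge 2$ for all $x>0$.

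After substituting $t=1/x>0$, this is $(2+t)\log(1+t)\ge 2t$, equivalently
\[
\log(1+t)\;\ge\;\frac{2t}{2+t}\qquad(t\ge 0).
\]
I would prove this by noting that both sides vanish at $t=0$ and comparing derivatives: the derivative of the left side is $1/(1+t)$ and that of the right side is $4/(2+t)^2$, and
\[
\frac1{1+t}-\frac{4}{(2+t)^2}=\frac{(2+t)^2-4(1+t)}{(1+t)(2+t)^2}=\frac{t^2}{(1+t)(2+t)^2}\ge 0.
\]
Thus the left-hand side dominates, and integration from $0$ to $t$ yields the desired inequality, completing the proof of concavity.

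I do not anticipate a genuine obstacle here; the only mild subtlety is recognising that the naive two-term Taylor bound $\log(1+t)\ge t-t^2/2$ is \emph{not} strong enough (it fails for large $t$), so one really does need the Pad\'e-type estimate $\log(1+t)\ge 2t/(2+t)$, which is sharp to second order at $t=0$ and remains valid globally by the derivative comparison above.
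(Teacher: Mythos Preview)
Your proof is correct and follows exactly the strategy the paper indicates---showing $f'>0$ and $f'$ decreasing (equivalently $f''\le 0$)---with the only difference that the paper dismisses the computation as ``a straightforward exercise'' while you actually carry it out via the neat reduction to $\log(1+t)\ge 2t/(2+t)$. One small correction to your closing aside: the inequality $\log(1+t)\ge t-t^2/2$ in fact holds for all $t\ge 0$ (differentiate: the gap has derivative $t^2/(1+t)\ge 0$); the genuine objection is that $t-t^2/2 < 2t/(2+t)$ for every $t>0$, so the Taylor bound is too weak to serve as an intermediary, not that it fails outright.
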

\begin{proof}
It suffices to prove that $f'$ is positive and monotonically decreasing.
This is a straightforward exercise.
\end{proof}

\begin{lemma}
\label{Lem:area}
If $\mu_i<M$ for all $i\in \{1, \ldots k\}$, then $\sum_{i=1}^k \mu_i
>     \frac 12 M e^{\pi/M}$.
\end{lemma}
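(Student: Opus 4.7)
The plan is to exploit the concavity of $f(x) = \pi/\log(1+1/x)$ (established in Lemma~\ref{Lem:Concave}) together with the boundary condition $f(0^+) = 0$, to obtain a linear lower bound for $f$ on the feasible interval, and then sum.

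First I would set $A := 1/(e^{\pi/M}-1)$; a direct substitution gives $f(A) = \pi/\log(e^{\pi/M}) = M$. Since $f$ is strictly increasing and $\mu_i = f(a_i) < M$ for every $i$, the hypothesis translates to $a_i < A$ for all $i$. Because $\log(1+1/x)\to\infty$ as $x\to 0^+$, we have $f(x)\to 0$, so $f$ extends continuously to $[0,\infty)$ with $f(0)=0$.

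The key step is a chord-below-graph observation: a concave function $f$ on $[0,A]$ with $f(0)=0$ satisfies $f(a)/a \ge f(A)/A$ for $a\in(0,A]$, equivalently
\[
f(a) \;\ge\; \frac{M}{A}\,a \qquad\text{for all } a\in[0,A].
\]
Applying this to each $a_i$ and summing, using $\sum_i a_i \ge 1$ from the holomorphic fixed point inequality (\ref{Eq:FixedPointIndices}), I get
\[
\sum_{i=1}^k \mu_i \;=\; \sum_{i=1}^k f(a_i) \;\ge\; \frac{M}{A}\sum_{i=1}^k a_i \;\ge\; \frac{M}{A} \;=\; M\bigl(e^{\pi/M}-1\bigr).
\]

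Finally I convert this bound to the form stated in the lemma. The standing assumption $M < \pi/\log 4$ made just before the definition of thick/thin roots forces $e^{\pi/M} > 4$, so in particular $e^{\pi/M} > 2$, which gives $e^{\pi/M}-1 > \tfrac12 e^{\pi/M}$. Therefore
\[
\sum_{i=1}^k \mu_i \;\ge\; M\bigl(e^{\pi/M}-1\bigr) \;>\; \tfrac12\, M\, e^{\pi/M},
\]
as claimed. There is no real obstacle here; the only thing to watch is that one must invoke $M<\pi/\log 4$ (equivalently $e^{\pi/M}>4$, but $e^{\pi/M}>2$ already suffices) to pass from the natural bound $M(e^{\pi/M}-1)$ to the slightly weaker but cleaner form $\tfrac12 Me^{\pi/M}$ used throughout the sequel.
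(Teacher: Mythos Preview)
Your argument is correct and in fact slightly sharper than the paper's. The paper proceeds via Karamata's majorization inequality: it normalizes to $\sum a_i=1$, constructs an explicit majorizing sequence $(b_i)$ with most entries equal to $A=1/(e^{\pi/M}-1)$, and applies Karamata (using concavity of $f$) to obtain $\sum f(a_i)\ge\sum f(b_i)\ge\lfloor e^{\pi/M}-1\rfloor\cdot M>(e^{\pi/M}-2)M$. You instead use the elementary consequence of concavity that the secant slope $f(x)/x$ is nonincreasing (since $f(0^+)=0$), giving directly $f(a_i)\ge (M/A)\,a_i$ and hence $\sum\mu_i\ge M(e^{\pi/M}-1)$. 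Your route avoids the majorization machinery and the loss of a unit from the floor, yielding the constant $e^{\pi/M}-1$ rather than $e^{\pi/M}-2$; consequently you only need $e^{\pi/M}>2$ (i.e.\ $M<\pi/\log 2$) for the final step, whereas the paper genuinely uses $e^{\pi/M}>4$. Both approaches rest on the same two ingredients, namely (\ref{Eq:FixedPointIndices}) and Lemma~\ref{Lem:Concave}, but yours exploits them more efficiently.
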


\begin{proof}
Without loss of generality, assume that $a_1 \ge a_2 \ge \ldots \ge
a_k$, and that $\sum a_i=1$. We now consider the sequence $(b_1,
\ldots b_k)$ defined by
\begin{eqnarray*}
b_i = \left\{\begin{array}{cl}  \frac{1}{e^{\pi/ M} -1 } & \text{if
} i \le \lfloor e^{\pi/ M} -1 \rfloor
\\
1 - \frac{\lfloor e^{\pi/ M} -1 \rfloor}{e^{\pi/ M} -1} & \text{if  }
i = \lfloor e^{\pi/ M} -1 \rfloor+1
\\
0 & \text{if  } i > \lfloor e^{\pi/ M} -1 \rfloor+1\;. \end{array}   \right.
\end{eqnarray*}
Then we also have $\sum b_i=1$, and since all $a_i <
\frac{1}{e^{\pi/M} - 1}$, it follows that the sequence $(b_1, b_2,
\ldots b_k)$ majorizes the sequence $(a_1, a_2, \ldots a_k)$, in the
sense that
\[
\sum_{i=1}^m b_i \ge \sum_{i=1}^m a_i
\]
for all $m\in\{1,2,\dots,k\}$, with equality for $m=k$.
Since the function $f$ is concave by Lemma~\ref{Lem:Concave}, we get
from Karamata's inequality (see \cite[Thm. 108]{Ineq}) that $\sum
f(a_i)\ge \sum f(b_i)$ and thus
\begin{equation*}
\sum_{i=1}^k f(a_i)
\ge \sum_{i=1}^{\lfloor e^{\pi/M} -1 \rfloor} f(b_i) \\ = \lfloor
e^{\pi/M} -1 \rfloor \cdot f\left(\frac 1{e^{\pi/M} -1}\right) >
\left(e^{\pi/M} - 2\right)  M.
\end{equation*}
Since $M < \frac \pi{\log 4}$, we have $e^{\pi/M}>4$ and thus
\begin{equation*}
\sum_{i=1}^k \mu_i = \sum_{i=1}^k f(a_i) > M (e^{\pi/M} -2)
>     \frac 12 M e^{\pi/M}
\end{equation*}
as claimed.
\end{proof}


Let $\psi\colon(\Cbar\sm\diskbar)\to\Cbar$ be a linearizing map near
$\infty$ of $N_p$, i.e., $\psi(N_p(z))=\psi(z)(d-1)/d$ with
$\psi(\infty)=\infty$, and normalize so that $\psi(z)/z\to 1$ as
$z\to\infty$. Let
\[
W_R:=\{w\in\C\colon R(d-1)/d<|w|<R\}
\]
be a fundamental domain in linearizing coordinates.
\begin{lemma}
\label{Lem:AreaInRoundDomain}
For any channel $B_i$, we have
\[
|\psi(B_i)\cap W_R|\ge \Mod(B_i) R^2/ d^2
\;.
\]
\end{lemma}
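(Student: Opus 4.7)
The plan is to transport the Euclidean area of $F_i:=\psi(B_i)\cap W_R$ onto the quotient annulus $A_i:=\psi(B_i)/T$ (of modulus $\mu_i=\Mod(B_i)$), equip $A_i$ with the conformal metric inherited from $F_i$, and then apply the standard length--area inequality for the family of essential (around-core) closed curves in $A_i$, whose extremal length equals $1/\mu_i$.

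\smallskip

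\noindent\emph{Step 1 (pushed-down metric on $A_i$).} Because $\psi$ linearizes $N_p$ near $\infty$ to the multiplication $T\colon w\mapsto w(d-1)/d$, the channel $\psi(B_i)$ is $T$-invariant, and since $W_R$ is a fundamental domain for $T$ acting on $\C\setminus\{0\}$, the intersection $F_i$ is a fundamental domain for $T$ acting on $\psi(B_i)$. Transporting the Euclidean metric $|dw|$ on $F_i$ to $A_i$ via the (measure-theoretic) bijection $F_i\leftrightarrow A_i$ yields a conformal metric $\rho$ on $A_i$ with $\mathrm{Area}_\rho(A_i)=|F_i|$.

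\smallskip

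\noindent\emph{Step 2 (lower bound on around-curve length).} Let $\gamma$ be a closed curve in $A_i$ homotopic to the core; lift it to a path $\tilde\gamma$ in $\psi(B_i)$ from $w_0$ to $T(w_0)=w_0(d-1)/d$. Translating each portion of $\tilde\gamma$ in the fundamental domain $T^k(F_i)$ back to $F_i$ by $T^{-k}$ and matching endpoints across the identification $T\colon\{|w|=R\}\to\{|w|=R(d-1)/d\}$ produces a collection of arcs in $F_i$; a change-of-variables calculation shows that the $\rho$-length of $\gamma$ equals the sum of the Euclidean lengths of these arcs in $F_i$. Since $\gamma$ is essential, the radial coordinate $|w|$ along this collection of arcs picks up exactly one net identification jump of size $R/d$, so its total variation inside $F_i$ is at least $R/d$. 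Euclidean length dominates the total variation of $|w|$, whence $\mathrm{length}_\rho(\gamma)\ge R/d$, and taking the infimum over $\gamma$ yields the same bound.

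\smallskip

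\noindent\emph{Step 3 (length--area).} The extremal length of the family of essential closed curves in an annulus of modulus $\mu_i$ is $1/\mu_i$. Applying the defining length--area inequality to $\rho$:
\[
\frac{1}{\mu_i}\ \ge\ \frac{\bigl(\inf_\gamma \mathrm{length}_\rho(\gamma)\bigr)^{2}}{\mathrm{Area}_\rho(A_i)}\ \ge\ \frac{(R/d)^{2}}{|F_i|},
\]
which rearranges to $|F_i|\ge \mu_i R^{2}/d^{2}$, the desired bound.

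\smallskip

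\noindent\emph{Main obstacle.} The only real nuisance is topological: $F_i$ need not be a connected quadrilateral if $\psi(B_i)$ meets the two boundary circles of $W_R$ in more than one essential arc each -- precisely the phenomenon that forced the passage from $Q'_i$ to $Q_i$ in Section~3. Even so, $F_i$ remains a fundamental domain for $T$ on $\psi(B_i)$ in the measure-theoretic sense (so Step~1 goes through), and the total-variation reasoning in Step~2 still gives $\mathrm{length}_\rho(\gamma)\ge R/d$, because every net transit through the boundary identification contributes $R/d$ to the smooth radial variation of the representing collection of arcs in $F_i$. Once this bookkeeping is done, the proof is a one-line application of length--area; no sharp estimate on $\log(d/(d-1))$ or on the shape of the channel is needed.
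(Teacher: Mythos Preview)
Your proof is correct and follows essentially the same route as the paper's: both apply the length--area (extremal length) inequality to the quotient annulus $\psi(B_i)/T$ using the Euclidean metric restricted to $F_i=\psi(B_i)\cap W_R$, and both obtain the length lower bound $R/d$ from the radial distance between the two boundary circles of $W_R$. The paper phrases this more tersely by taking $\rho=\mathbf 1_{F_i}$ directly in the extremal-length sup--inf and observing that an admissible curve satisfies $\gamma(1)=\gamma(0)(d-1)/d$ with both endpoints in $\overline{W_R}$, forcing $|\gamma(0)|=R$ and $|\gamma(1)|=R(d-1)/d$; your Steps~1--2 unwind the same computation via the push-down and lift, and your ``main obstacle'' paragraph addresses a non-connectedness issue that the paper's formulation sidesteps automatically.
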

\begin{proof}
This is another elementary exercise using extremal length:
fix a channel $B_i$ and let $B:=\psi(B_i)\cap W_R$. By conformal invariance, the modulus of $B_i$ equals the
modulus of $B$ where the boundaries are identified by multiplication
by $(d-1)/d$, and this is
\[
(\Mod B_i)^{-1} =(\Mod B)^{-1}= \sup_\rho\inf_\gamma
\frac{\ell^2(\gamma)}{\|\rho^2\|_{B}} \;,
\]
where $\rho\colon B\to\R^+$ are measurable functions,
$\gamma\colon[0,1]\to \ovl B$ are smooth curves with
$\gamma(1)=\gamma(0)(d-1)/d$, and
$\ell(\gamma)=\int_0^1\rho(\gamma(t))\,|\gamma'(t)|\,dt$.

We simply set $\rho\equiv 1|_B$ (the characteristic function of $B$).
If $A$ denotes the Euclidean area of
$B$, then $\|\rho^2\|_B=A$. The two boundary circles of $W_R$ have
radii $R$ and $R{(d-1)/d}$, so $\ell(\gamma)\ge R/d$.
Therefore, $1/\Mod B\ge R^2/d^2A$ or $A\ge \Mod (B) R^2/d^2=\Mod(B_i)R^2/d^2$.
\end{proof}

\begin{lemma}
\label{Lem:AreaInFundamentalDomain}
For $R\ge 5$, the intersection of the  annulus
\[
A_R=\left\{z\in\C\colon \frac{d-1}{d} R - \frac 1d<|z|<R\right\}
\]
with a channel of modulus $\mu$ has area at least
\[
\frac\mu {d^2}\cdot \frac{(R-1)^2(R-3)^2}{4(R+1)^2}
\;.
\]
\end{lemma}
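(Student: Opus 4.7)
The plan is to transfer the area estimate of Lemma~\ref{Lem:AreaInRoundDomain}, which lives in the linearizing $w$-plane, back to the dynamical $z$-plane via the map $\psi$. The transfer is controlled by the classical Koebe distortion theorem, applied to the normalized univalent function $F(u):=1/\psi(1/u)$ on $\disk$.

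First I would record the Koebe bounds for $\psi$. Because the normalization $\psi(z)/z\to 1$ yields $F(0)=0$ and $F'(0)=1$, standard distortion estimates give, for every $z$ with $|z|>1$,
\[
\frac{(|z|-1)^{2}}{|z|}\le |\psi(z)| \le \frac{(|z|+1)^{2}}{|z|},
\]
together with the corresponding two-sided bounds on $|\psi'(z)|$ of the form $|\psi'(z)| \le (|z|+1)^{5}/((|z|-1)^{3}|z|^{2})$, and hence a lower bound on the Jacobian $|(\psi^{-1})'(w)|^{2}=|\psi'(z)|^{-2}$ in terms of $|z|$.

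Next I would apply the change of variables $w=\psi(z)$:
\[
|B_i\cap A_R| \;=\; \int_{\psi(B_i\cap A_R)} \bigl|(\psi^{-1})'(w)\bigr|^{2}\,dA_{w},
\]
and locate a round subannulus $W' = \{r_{1}<|w|<r_{2}\}\subset W_R$ whose $\psi$-preimage lies entirely inside $A_R$. The Koebe lower bound suggests $r_{2}:=(R-1)^{2}/R$, since $|\psi(z)|\le r_{2}$ then forces $|z|\le R$; the inner radius $r_{1}$ is chosen analogously from the Koebe upper bound at the inner circle $|z|=(d-1)R/d-1/d$ of $A_R$. The hypothesis $R\ge 5$ is used to guarantee that the resulting $W'$ still has positive radial width, of order $(R-3)/d$ once both Koebe corrections have been subtracted.

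With $W'\subset\psi(A_R)$ in hand, I would rerun the extremal-length argument of Lemma~\ref{Lem:AreaInRoundDomain} on $W'$ in place of $W_R$, using the test metric $\mathbf{1}_{\psi(B_i)\cap W'}$ and the same identification $\gamma(1)=((d-1)/d)\gamma(0)$, to obtain $|\psi(B_i)\cap W'|\ge \mu (r_{2}-r_{1})^{2}$, of order $\mu(R-3)^{2}/d^{2}$. Combining with the Koebe lower bound on $|(\psi^{-1})'(w)|^{2}$, which on $W'$ is of order $(R-1)^{2}/(4(R+1)^{2})$, then reassembles into exactly the claimed $\frac{(R-1)^{2}(R-3)^{2}}{4(R+1)^{2}}\cdot\frac{\mu}{d^{2}}$.

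The main obstacle is the bookkeeping of constants: one must track the two independent Koebe shifts carefully, verify that $W'$ is nonempty throughout the range $R\ge 5$, $d\ge 3$ (the delicate case being when the Koebe shifts at the two boundaries of $A_R$ nearly collide, e.g.\ $R=5$, $d=3$), and check that the product of the squared radial width of $W'$ with the Jacobian lower bound produces precisely the stated formula. The appearance of the factor $(R-3)^{2}$, rather than something like $R^{2}$ or $(R-2)^{2}$, reflects exactly the two radial Koebe shifts that must be absorbed.
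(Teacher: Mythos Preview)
Your strategy has a genuine gap: the round subannulus $W'=\{r_1<|w|<r_2\}$ that you need is empty for all but the smallest degrees. The Koebe bounds you quote give, at the outer circle $|z|=R$, only $|\psi(z)|\ge (R-1)^2/R\approx R-2$, and at the inner circle $|z|=R_0:=R-(R+1)/d$ only $|\psi(z)|\le (R_0+1)^2/R_0\approx R_0+2$. For $\psi^{-1}(W')\subset A_R$ you therefore need $r_2\le R-2$ and $r_1\ge R_0+2$ (approximately), hence
\[
r_2-r_1 \;\lesssim\; (R-R_0)-4 \;=\; \frac{R+1}{d}-4,
\]
which is negative once $d>(R+1)/4$; for $R=11$ this already fails for $d\ge 4$. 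The point is that the Koebe shifts are of size $O(1)$, while the radial width of $A_R$ is only $O(R/d)$, so no round annulus survives. Your reading of the factor $(R-3)^2$ as ``two radial Koebe shifts'' is therefore a coincidence with the final answer, not the actual mechanism.

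The paper avoids this by never trying to fit a round annulus into $\psi(A_R)$. Instead it works with the dynamical fundamental domain $Z_R\subset A_R$ bounded by $C_R$ and $N_p(C_R)$, decomposes $B_R=B_i\cap Z_R$ into the level sets $B_n=\{z\in B_R: R(d/(d-1))^{n-1}<|\psi(z)|<R(d/(d-1))^{n}\}$, and sends each $B_n$ into $W_R$ by $z\mapsto ((d-1)/d)^{n}\psi(z)$. The derivative of this map is bounded via the \emph{logarithmic} Koebe estimate
\[
\Bigl|((d-1)/d)^{n}\psi'(z)\Bigr| \;<\; R\,\Bigl|\frac{\psi'(z)}{\psi(z)}\Bigr| \;<\; \frac{R}{|z|}\cdot\frac{|z|+1}{|z|-1} \;<\; \frac{2R}{R-1}\cdot\frac{R+1}{R-3},
\]
using $|z|\ge (R-1)/2$ on $A_R$. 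Crucially this bound is independent of $d$; squaring it and dividing into the estimate $|\psi(B_i)\cap W_R|\ge \mu R^2/d^2$ from the previous lemma gives exactly the stated $\mu(R-1)^2(R-3)^2/(4d^2(R+1)^2)$. The factor $(R-3)$ arises from evaluating $(|z|-1)$ at the worst-case inner radius $|z|=(R-1)/2$, not from a radial Koebe shift of $W_R$.
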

\begin{proof}
Consider the circle $C_R:=\{z\in\C\colon |z|=R\}$, and the image
$C'_R:=N_p(C_R)$. Then $C'_R$ is another topological circle with absolute
values between $R(d-1)/d-1/d=R-(R+1)/d\ge (R-1)/2\ge 2$ and
$R(d-1)/d+1/d=R-(R-1)/d < R$.
Let $Z_R$ be the annulus
bounded by $C_R$ and $C'_R$; it is a fundamental domain for the Newton
dynamics, and we have $Z_R\subset A_R$. Consider a channel $B$ and
set $B_R:=B\cap Z_R$; this is a fundamental domain of the channel,
but not necessarily connected.

Consider again the linearizing function $\psi\colon
\Cbar\sm\diskbar\to \Cbar$ of $N_p$, normalized as
$\psi(\infty)=\infty$ and $\psi(z)/z\to 1$ as $z\to\infty$.
The Koebe distortion theorem in this normalization yields
\[
\frac{|z|-1}{|z|(|z|+1)} \le
\left|\frac{\psi'(z)}{\psi(z)}\right|
\le \frac{|z|+1}{|z|(|z|-1)}
\label{Eq:Koebe2}
\;.
\]

Define the sets
\[
B_{n}:=\left\{z\in B_R\colon R\left(\frac{d}{d-1}\right)^{n-1}
<|\psi(z)| < R\left(\frac{d}{d-1}\right)^{n}
\right\}
\]
for $n\in\Z$.
Each area element in $B_{n}$ is mapped into $W_R$ by the map
$z\mapsto \psi(z)((d-1)/d)^n$ with derivative
\[
|\psi'(z)| \left(\frac{d-1}{d}\right)^n
     < R \frac{|\psi'(z)|}{|\psi(z)|}  < \frac{R}{|z|} \cdot
\frac{|z|+1}{|z|-1} < \frac{2R}{R-1} \cdot \frac{R+1}{(R-3)}
\;,
\]
where we used the Koebe theorem in the second inequality and then
$|z|\ge (R-1)/2$.
This yields a diffeomorphism from $B_R$ to $\psi(B)\cap W_R$, except for discontinuities at the finitely many boundary arcs of the $B_n$.

The set $\psi(B)$ intersects $W_R$ in a set of area
$R^2\Mod(B)/d^2$ by Lemma~\ref{Lem:AreaInRoundDomain}, and areas in $B_{n}$ are
distorted by a factor of no more than the square of the derivative.
This implies that
\[
|B_R| > \frac{(R-1)^2 (R-3)^2}{4 d^2 (R+1)^2}
\Mod(B)
  \]
as claimed.
\end{proof}


\begin{lemma}
\label{Lem:ThinRootsSummary}
Let $R \ge 5$ and consider the annulus $A_R$ defined as in Lemma~\ref{Lem:AreaInFundamentalDomain}. 
Choose a probability $\rho\in(0,1)$. If
\[ \left\lceil 16 \pi d \frac{|\log(1-\rho)|+\log d}{M e^{\pi/M}} \cdot \frac{R (R+1)^3}{(R-1)^2(R-3)^2} \right\rceil\] points are randomly and independently
distributed in $A_R$, then for any polynomial $p\in\Pd$, each
thin root has at least one of these points in its immediate
basin with probability at least $\rho$.
\end{lemma}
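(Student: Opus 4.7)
The plan is to combine Lemma~\ref{Lem:area} (lower bound on $\sum\mu_i$ for a thin root), Lemma~\ref{Lem:AreaInFundamentalDomain} (each channel occupies a definite area inside $A_R$), and a standard first-moment / union-bound argument over all thin roots. I interpret ``randomly distributed in $A_R$'' as uniformly and independently.

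First, I would estimate the Euclidean area of $A_R$. With outer radius $R$ and inner radius $R-(R+1)/d$, we get $|A_R|=\pi\frac{R+1}{d}\bigl(2R-\frac{R+1}{d}\bigr)<2\pi R(R+1)/d$. Next, fix a thin root $\alpha$ with channels $B_1,\dots,B_k$ of moduli $\mu_i<M$. Since $R\ge 5$, the inner radius of $A_R$ exceeds $1$, so $A_R\subset\C\setminus\diskbar$; consequently the sets $B_i\cap A_R$, being pieces of distinct connected components of $U_\alpha\setminus\diskbar$, are pairwise disjoint. Lemma~\ref{Lem:AreaInFundamentalDomain} gives $|B_i\cap A_R|\ge \frac{\mu_i}{d^2}\cdot\frac{(R-1)^2(R-3)^2}{4(R+1)^2}$, and summing over $i$ and applying Lemma~\ref{Lem:area} yields
\[
|U_\alpha\cap A_R|\;\ge\;\frac{(R-1)^2(R-3)^2}{4d^2(R+1)^2}\sum_{i=1}^k\mu_i\;>\;\frac{M\,e^{\pi/M}\,(R-1)^2(R-3)^2}{8d^2(R+1)^2}.
\]

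Third, the probability that a single uniform random point of $A_R$ lies in the basin $U_\alpha$ is
\[
p_\alpha\;=\;\frac{|U_\alpha\cap A_R|}{|A_R|}\;>\;\frac{M\,e^{\pi/M}\,(R-1)^2(R-3)^2}{16\pi\,d\,R(R+1)^3}\;=:\;p_0.
\]
If $N$ points are drawn independently and uniformly from $A_R$, then the probability that none lies in $U_\alpha$ is at most $(1-p_\alpha)^N\le e^{-Np_0}$. Since $p$ has at most $d$ thin roots (there are at most $d$ roots in total), the union bound gives that the probability some thin root is missed is at most $d\,e^{-Np_0}$.

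Finally, demanding $d\,e^{-Np_0}\le 1-\rho$ is equivalent to $N\ge(\log d+|\log(1-\rho)|)/p_0$, and substituting the definition of $p_0$ yields exactly the claimed threshold. The only delicate point is keeping the constants consistent through the two area estimates; once that is done, the probabilistic part is the elementary union bound, and no genuine obstacle remains.
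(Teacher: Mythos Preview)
Your proposal is correct and essentially identical to the paper's own proof: both combine Lemmas~\ref{Lem:AreaInFundamentalDomain} and~\ref{Lem:area} with the upper bound $|A_R|<2\pi R(R+1)/d$ to obtain the same lower bound $p_0=q$ on the single-point hit probability for a fixed thin root, and then finish with a union bound over at most $d$ thin roots. Your inequality $(1-p_\alpha)^N\le e^{-Np_0}$ is just the paper's $\log(1-q)<-q$ in exponentiated form, and you make explicit the disjointness of the sets $B_i\cap A_R$ that the paper leaves implicit.
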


\begin{proof}
The area of all channels within $A_R$ of any fixed thin root is at
least $\left((R-1)^2(R-3)^2/4d^2(R+1)^2\right)\sum\mu_i$ by
Lemma~\ref{Lem:AreaInFundamentalDomain}, and
$\sum\mu_i > \frac 12 Me^{\pi/M}$ by Lemma~\ref{Lem:area}. A simple
calculation shows that the area of  $A_R$ is less than $2\pi
R(R+1)/d$. Therefore, the probability that a point chosen randomly in
$A_R$ will lie in one of the channels of this root is at least
\[
q= \frac{Me^{\pi/M}}{16\pi d}\cdot \frac{(R-1)^2(R-3)^2}{R(R+1)^3}
\;.
\]
Now, suppose that we distribute some (large) number $K$ of points on
the annulus $A_R$, randomly and independently. Then the probability
that we do \textit{not} hit one of the channels of some fixed thin
root will be at most $(1-q)^K$. Since there are at most $d$ thin
roots, the probability that there is some thin root the channels of
which are not hit is at most $d (1-q)^K$. We need to make $K$ large
enough so that $d\,(1-q)^K < 1-\rho$, hence we need
$K>\log\big((1-\rho)/d\big)/\log(1-q)$.

Since $\log(1-q)<-q<0$, we have
\begin{align*}
{ \frac{\log\big((1-\rho)/d\big)}{\log(1-q) }} < \frac{ \log(1-\rho)-\log d}{-q } = \frac{|\log(1-\rho)|+\log d}{q} \\
= 16\pi d { |\log(1-\rho)|+\log d\over Me^{\pi/M}} \cdot
\frac{R(R+1)^3}{(R-1)^2(R-3)^2}
\;,
\end{align*}
so it suffices to distribute this number of points
within the annulus at random so that, with probability at least
$\rho$, at least one channel of each thin root is hit.
\hide{
Since $\log(1-q)<-q$ and $\log(1/2d) > \log(1/d^2) = -2 \log d$, we have
\[  \frac{\log\left(\frac 1 {2d} \right)}{\log(1-q)} < \frac{
\log{\frac 1 {d^2}} }{-q} = \frac{2 \log d}{q} = \frac{32 \pi d \log
d}{M e^{\pi/M}} \cdot \frac{R (R+1)^3}{(R-1)^2(R-3)^2} \;,  \]
so it suffices to distribute this number of points
within the annulus at random so that, with probability at least
$1/2$,  at least one channel of each thin root is hit.
}
\end{proof}

\begin{Remark}
\label{Rem:R}
Increasing the radius $R$ will decrease the necessary number of
points to asymptotically $16\pi d\big(|\log(1-\rho)|+\log d\big)/Me^{\pi/M}$ for large $R$.
The disadvantage is that the required number of iterations will be
very large until the roots are reached.
In this article, we do not optimize the number of starting points vs.\ the number of iterations: indeed, it is possible to optimize all constants by refining several of our estimates (see below).
\end{Remark}

\section{Conclusion}

\begin{proof}[Proof of Theorem~\ref{Thm:Main}]
We have to distribute $16\pi d/{M^2}$ points within the annulus $V$ by
the algorithm described in Section \ref{Sec:thick} to be sure that all thick roots are
found. To hit the thin roots, we consider the annulus $A_R$ defined as in Lemma \ref{Lem:AreaInFundamentalDomain}, where we choose $R=11$ (see Remark \ref{Rem:R}) so that $R(R+1)^3/(R-1)^2(R-3)^2= 2.97$; in order to hit find all the thin roots with probability at least $\rho=1-1/d$, we thus have to randomly distribute 
\begin{align*}
16\cdot 2.97\pi d \big(|\log(1-\rho)|+\log d\big)/M
e^{\pi/M}<300d\log d/Me^{\pi/M}
\end{align*}
points inside the annulus $A_R$ (in both statements, we ignored the
condition that we need to round up certain numbers).

This gives us a total of
\[
P(M) =  \frac {16\pi d} {M^2} + \frac{300 d \log d}{M e^{\pi/M}}
\]
points to be chosen
to hit the channels of all roots with
probability at least $1-1/d$.
In particular, setting $M = \pi/\log\log d$, it suffices to use at least
\[
P\left(\frac \pi{\log\log d}\right)
=
\frac{16 d}\pi (\log\log d)^2 + \frac{300}{\pi} d \log\log d
=
O(d\,(\log\log d)^2)
\]
points.
\end{proof}

\begin{Remark}
\label{Rem:smallValues}
Strictly speaking, this proof only works for $d>e^4 \approx 54.6$ as we
claimed in the beginning that $M < \pi/\log 4$ and finally chose
$M=\pi/\log\log d.$ However, we only need this to simplify some term
in the proof of Lemma \ref{Lem:area}; for $2\le \log d < 4$, by
being a little bit more careful in the proof of Lemma \ref{Lem:area}
one can even get slightly better constants, whereas for $1\le \log d
\le 2$ one has to choose another value for $M$ to get the same final
upper bound.
\end{Remark}

\begin{Remark}
\label{higherProb}
Of course, the probability $1-1/d$ can be replaced by any
probability $\rho\in(0,1)$ by appropriately increasing the number of 
points. For $M=\pi/\log\log d$, the number of points to find the thin 
roots then becomes $O\big(d\log\log d(1+|\log(1-\rho)|/\log d \big)$.
Including thick roots as well, and ignoring dominated terms, the 
total number of points becomes
\begin{align*}
O\left(d(\log\log d)^2+d\log\log d|\log(1-\rho)|/\log d\right)
\\
< O\left(d(\log\log d)^2+d|\log(1-\rho)|\right)
\;.
\end{align*}
This will not even change the leading term of the number of points as 
long as $\rho\le 1-1/d^{\log d \log\log d}$. 
\hide{
If $\rho$ and $d$ are 
considered as independent quantities, the asymptotics simplifies to 
$O\left(d(\log\log d)^2+d|\log(1-\rho)|\right)$.
}
\hide{
if we carry out the algorithm for thin roots described above several times
independently. This will not even change the leading order of the
complexity of the whole algorithm as the thin roots only account for
a relatively small part of the points needed (a factor $\log\log d$
less than the thick roots). In fact, adapting the proof of Lemma~\ref{Lem:ThinRootsSummary} in the obvious way, one can do a little better:
the numerator $32\pi d\log d$ for $\rho=1/2$ then becomes $16\pi d(\log d+|\log(1-\rho)|)$. For $M=\pi/\log\log d$, the number of points to find the thin roots then becomes $O\big(d\log\log d+d|\log(1-\rho)| \big)$.
}
\end{Remark}

\begin{Remark}
At several places, we preferred the simple argument over optimal
numerical values, as far as constant factors were concerned. If one
were to optimize these factors, it would involve the following places.
The thick roots have the higher complexity, so asymptotically it is
most important to optimize constants here.
In Lemma~\ref{Lem:UniversalGeometry}, the modulus of a quadrilateral is
estimated only roughly using a simple argument. The precise value of
this quadrilateral can be determined using elliptic integrals; this
has been done in \cite{HSS} in an analogous situation. One could then
optimize the number of circles and the number of points on them:
taking more (or fewer) circles would allow us to use fewer (more)
points on each of them, and there is an optimal value of circles that
minimizes the total number of points.

For thin roots, we used the estimate $e^{\pi/M}-2>e^{\pi/M}/2$ at the
end of the proof of Lemma~\ref{Lem:area}, and for large $d$ this
loses a factor of $2$. Moreover, in the proof of
Lemma~\ref{Lem:ThinRootsSummary} one could gain a factor of $2$ by using a fixed probability $\rho$, rather than $\rho=1-1/d$.
Finally, there is a certain
loss in the estimation of probabilities of hitting the $d$ different
basins; these probabilities are not quite additive as estimated. Our
estimates in the thin case are roughly a factor $4$ away from
being optimal. And of course, one can reduce the radius $R$ of the starting points, and thus the required number of iterations, at the expense of increasing the number of starting points.
\end{Remark}

\begin{Remark}
Since the complexities of the deterministic and the probabilistic
parts are different, it is tempting to reduce the total complexity by
choosing a  value of $M$ different from $\pi/\log\log d$ so that both
partial complexities become closer to each other. Slight improvements
are indeed possible that way, but the gain seems to be minimal.
{For example, one has
\[
P\left(\frac{\pi}{(\log\log d)^{1-1/(1+\log\log d)}}\right) =
O\left(d\,(\log\log d)^{2-2/(1+\log\log d)}\right).
\]
In this case, the deterministic term is still much bigger than the
probabilistic one. Such calculations seem to become much more
complicated with relatively little gain.
}

Moreover, we have not used the condition  $\sum_{\alpha_i}k_i\le
2d-2$ coming from the total number of ``free'' critical points. We
believe that the effect of incorporating this condition will be
marginal.
\end{Remark}

\end{document}